\documentclass[final,3p,times]{elsarticle}
\usepackage[english]{babel}
\usepackage[utf8x]{inputenc}
\usepackage[T1]{fontenc}
\usepackage{amssymb}
\usepackage{amsthm}
\usepackage{amsmath}
\usepackage{mathrsfs}
\usepackage{hyperref}
\hypersetup{
    colorlinks,
    linkcolor=red,
    citecolor=black,
    urlcolor=blue
}
\usepackage{floatrow}
\usepackage{color}
\biboptions{numbers,sort&compress}
\usepackage{enumitem}
\usepackage{tikz}
\usepackage{graphicx}
\usetikzlibrary{shapes}
\usetikzlibrary{arrows}
\usepackage{tkz-graph}
\usepackage{caption}
\usepackage{subcaption}
\usepackage{wrapfig}
\usepackage{xparse}

\newtheorem{thm}{Theorem}[section]
\newtheorem{lem}[thm]{Lemma}

\newtheorem{conj}[thm]{Conjecture}

\newtheorem{defn}[thm]{Definition}

\newtheorem{cor}[thm]{Corollary}

\newtheorem{que}[thm]{Question}
\newtheorem{rem}[thm]{Remark}



\ExplSyntaxOn
\NewDocumentCommand{\breakdot}{m}
 {
  \tl_set:Nn \l_tmpa_tl { #1 }
  \tl_replace_all:Nnn \l_tmpa_tl { . } { .\discretionary{}{}{} }
  \tl_use:N \l_tmpa_tl
 }
\ExplSyntaxOff

\journal{European Journal of Combinatorics}
\begin{document}
\pagestyle{plain}
\begin{frontmatter}



\title{Strong Ramsey Games in Unbounded Time}


\author[1]{Stefan David}
\ead{sd637@cam.ac.uk}
\author[2]{Ivailo Hartarsky\corref{cor}
\footnote{Present address: CEREMADE, CNRS, UMR 7534, Universit\'e Paris-Dauphine, PSL University, Place du Mar\'echal de Lattre de Tassigny, 75775 Paris Cedex 16, France}}
\ead{hartarsky@ceremade.dauphine.fr}
\cortext[cor]{Corresponding author}
\author[1]{Marius Tiba}
\ead{mt576@cam.ac.uk}
\address[1]{DPMMS, University of Cambridge, Wilberforce Road, Cambridge, UK CB3 0WA}
\address[2]{DMA UMR 8553, \'Ecole Normale Sup\'erieure, CNRS, PSL University, 45 rue d'Ulm, 75005 Paris, France}

\begin{abstract}
For two graphs $B$ and $H$ the strong Ramsey game $\mathcal{R}(B,H)$ on the board $B$ and with target $H$ is played as follows. Two players alternately claim edges of $B$. The first player to build a copy of $H$ wins. If none of the players win, the game is declared a draw. A notorious open question of Beck \cite{Beck96,Beck02,Beck08} asks whether the first player has a winning strategy in $\mathcal{R}(K_n,K_k)$ in bounded time as $n\rightarrow\infty$. Surprisingly, in a recent paper \cite{Hefetz17} Hefetz, Kusch, Narins, Pokrovskiy, Requil\'e and Sarid constructed a $5$-uniform hypergraph $\mathcal{H}$ for which they proved that the first player does not have a winning strategy in $\mathcal{R}(K_n^{(5)},\mathcal{H})$ in bounded time. They naturally ask whether the same result holds for graphs. In this paper we make further progress in decreasing the rank.

In our first result, we construct a graph $G$ (in fact $G=K_6\setminus K_4$) and prove that the first player does not have a winning strategy in $\mathcal{R}(K_n \sqcup K_n,G)$ in bounded time. As an application of this result we deduce our second result in which we construct a $4$-uniform hypergraph $G'$ and prove that the first player does not have a winning strategy in $\mathcal{R}(K_n^{(4)},G')$ in bounded time. This improves the result in the paper above.

By compactness, an equivalent formulation of our first result is that the game $\mathcal{R}(K_\omega\sqcup K_\omega,G)$ is a draw. Another reason for interest on the board $K_\omega\sqcup K_\omega$ is a folklore result that the disjoint union of two finite positional games both of which are first player wins is also a first player win. An amusing corollary of our first result is that at least one of the following two natural statements is false: (1) for every graph $H$, $\mathcal{R}(K_\omega,H)$ is a first player win; (2) for every graph $H$ if $\mathcal{R}(K_\omega,H)$ is a first player win, then $\mathcal{R}(K_\omega\sqcup K_\omega,H)$ is also a first player win. Surprisingly, we cannot decide between the two.

\end{abstract}

\begin{keyword}
Positional games\sep Ramsey games\sep Strong games


\MSC[2010] 05C57 \sep 91A43 \sep 91A46

\end{keyword}

\end{frontmatter}
\section{Introduction}
\subsection{Background}
Positional games were first studied by Hales and Jewett \cite{Hales63} and Erd\H{o}s and Selfridge \cite{Erdos73}. The general setting was given by Berge \cite{Berge76}, but the field was shaped by the numerous works of Beck since the early 80s including \cite{Beck82b,Beck81,Beck82a,Beck85,Beck96,Beck02,Beck08}. Though the most natural positional games are the strong ones, in which both players compete to achieve the same objective, the theory has largely deviated from this direction due to the prohibitive difficulty of strong games. Many weak variants have been developed and have proved more suitable for study, like Maker-Breaker games, for which much is known (see e.g. \cite{Beck08}). However, the strong games remain very poorly understood.

In the present paper we study instances of a particular type of strong game -- the strong Ramsey game. This was first introduced by Harary \cite{Harary72} for cliques and later for arbitrary graphs \cite{Harary82}. In the strong Ramsey game, or simply the \emph{Ramsey game}, $\mathcal{R}(B,H)$ on a finite or infinite graph $B$, called the \emph{board}, with \emph{target graph} $H$, two players, \textbf{P1} and \textbf{P2}, alternate to take previously unclaimed edges of $B$, starting with \textbf{P1}. The first player to claim an isomorphic copy of $H$ \emph{wins} and if none does so in finite time, the game is declared a \emph{draw}. More generally, one might take $B$ and $H$ to be $r$-uniform hypergraphs. This setting was already mentioned by Beck and Csirmaz and Beck in \cite{Beck81,Beck82b}.

For fixed graphs $B, H$, in $\mathcal{R}(B,H)$, a very general strategy stealing argument due to Nash shows that \textbf{P2} cannot have a winning strategy. Moreover, for any fixed target graph $H$ it follows from Ramsey's theorem \cite{Ramsey30} that $\mathcal{R}(K_n,H)$ cannot end in a draw for $n$ sufficiently large. Therefore, in $\mathcal{R}(K_n,H)$ \textbf{P1} has a winning strategy for $n$ sufficiently large. The strategy given by this argument is not explicit, and indeed almost no examples of explicit strategies are known. In particular, no explicit strategy has been exhibited for $\mathcal{R}(K_n,K_k)$ with $k\geq 5$ and $n$ large.

This fact makes it difficult to attack most natural questions in the field. For instance a notorious open question popularised by Beck \cite{Beck96,Beck02,Beck08} asks if, for fixed $k$, in the game $\mathcal{R}(K_n,K_k)$, \textbf{P1} has a winning strategy in bounded time as $n \rightarrow \infty$. An easy compactness argument shows that this is equivalent to the game $\mathcal{R}(K_\omega,K_k)$ being a \textbf{P1}-win. The answer is conjectured to be in the affirmative \cite{Beck96,Beck02,Pekec96} but no progress has been made on the problem for $k\geq 5$. Beck emphasised the importance of the question when he listed the question among his ``7 most humiliating open problems'' \cite{Beck08}.
In the opposite direction, another notorious open question asks if for every fixed target graph $H$ in the game $\mathcal{R}(K_n,H)$, \textbf{P1} has a winning strategy in bounded time as $n \rightarrow \infty$ or, equivalently, $\mathcal{R}(K_\omega,H)$ is a \textbf{P1} win. In a recent paper \cite{Hefetz17}, Hefetz, Kusch, Narins, Pokrovskiy, Requilé and Sarid addressed the natural generalisation to hypergraphs, and changed the intuition about this phenomenon completely. Surprisingly, in  \cite{Hefetz17} they exhibited a target $5$-uniform hypergraph $\mathcal{H}$ for which they constructed an explicit drawing strategy for \textbf{P2} in the game $\mathcal{R}\left(K_{\omega}^{(5)},\mathcal{H}\right)$. This result provides strong evidence that just strategy stealing and Ramsey-type arguments are insufficient to attack Beck's conjecture.

However, the corresponding question for graphs, as asked in \cite{Hefetz17}, still remains open. As we explain in Section~\ref{sec:overview}, as the rank decreases this question becomes much harder. In this paper we make further progress in decreasing it. In our first result, Theorem \ref{th:main} (see Corollary~\ref{cor:main}), we exhibit a graph $G$ (in fact $G=K_6\setminus K_4$, see Figure~\ref{fig:G}), for which we prove that in $\mathcal{R}(K_n\sqcup K_n, G)$ \textbf{P1} does not have a winning strategy in bounded time. In order to do so we build on the work of~\cite{Hefetz17}. However, there is a serious obstacle in adapting the strategy developed in~\cite{Hefetz17} from $5$-uniform hypergraphs to graphs. As we further explain in Section~\ref{sec:overview}, although the strategy developed in~\cite{Hefetz17} is a strong game drawing strategy, the core of the argument is a weak game fast winning strategy that is inapplicable to graphs. Therefore, most of our effort is spent developing a much more intricate and elaborate strategy for graphs reflecting characteristic difficulties of strong games, which we discuss further in Section~\ref{sec:remarks}.

Turning to hypergraphs, in our second result, Theorem~\ref{th:hyper} (see Corollary~\ref{cor:hyper}), we exhibit a $4$-uniform hypergraph $G'$, obtained by adding $2$ new vertices and including them in all edges of $G$, for which we prove that in $\mathcal{R}\left(K_n^{(4)},G'\right)$ \textbf{P1} does not have a winning strategy in bounded time. In order to do so we cover the board $K_n^{(4)}$ with copies of the board $K_{n-2}^{(2)}$, noting that $K_n^{(4)}= \bigcup_{X\neq Y\in V(K_n^{(4)})}K_{n-2}^{X,Y}$, where $K_{n-2}^{X,Y}$ is the set of hyperedges containing $X$ and $Y$, naturally identified with $K_{n-2}$. While in the graph setting the strategy is considerably more involved, in the hypergraph setting the strategy is much more difficult to analyse.

It is not clear whether one should expect $\mathcal{R}(K_n,G)$ to admit a bounded time winning strategy for \textbf{P1}, but it is known (see~\cite{Bowler12} for a simple proof) that for having a winning strategy on $K_n$ is equivalent to having one on $K_n\sqcup K_n$. Thus, by compactness, our result refutes one of two natural conjectures extending simple finite board facts to infinite boards. Namely, it is not the case that on the one hand for every graph $H$ $\mathcal{R}(K_\omega,H)$ is a \textbf{P1}-win and on the other hand for every graph $H$ if $\mathcal{R}(K_\omega,H)$ is a \textbf{P1}-win, then $\mathcal{R}(K_\omega\sqcup K_\omega,H)$ is also a \textbf{P1}-win.

\subsection{Results}
In this section we state our results. 

\begin{defn}
Let $G=K_6\setminus K_4$ be the graph with edge set $E(K_6)\setminus E(K_4)$ (see Figure~\ref{fig:G}).
\end{defn}
In the graph setting the central result is as follows. 
\begin{thm}
\label{th:main}
The game $\mathcal{R}(K_\omega\sqcup K_\omega,G)$ is a draw.
\end{thm}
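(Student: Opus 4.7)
\medskip

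\noindent\textbf{Proof proposal.} Since $G=K_6\setminus K_4$ is connected, every copy of $G$ in $K_\omega\sqcup K_\omega$ is contained in one of the two clique components, which I label $B_1$ and $B_2$. Nash's strategy stealing argument rules out a \textbf{P2} winning strategy, so it suffices to exhibit an explicit drawing strategy for \textbf{P2} that prevents \textbf{P1} from ever building a copy of $G$. The structural observation underpinning the strategy is that $G$ is a book: a copy is determined by its unique \emph{spine} edge $ab$, the only edge whose endpoints have degree $5$ in $G$, together with four \emph{pages}, i.e.\ vertices $c$ with both $ac$ and $bc$ present. Hence \textbf{P1} wins exactly when some edge of \textbf{P1}'s graph accumulates four \textbf{P1}-pages, and \textbf{P2}'s task reduces to keeping the number of \textbf{P1}-pages at every \textbf{P1}-spine capped at three on each board.

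The plan is to let the second board supply the tempo that \textbf{P2} needs for these blocking moves, replacing the weak-game fast win of \cite{Hefetz17}, which is unavailable at rank two. \textbf{P2} designates one board as the \emph{threat board} and builds on it a \emph{reservoir}: a \textbf{P2}-spine together with three \textbf{P2}-pages and many \emph{half-pages}, where a half-page is a vertex $c$ for which exactly one of the two spine-to-$c$ edges is already a \textbf{P2}-edge and the other is unclaimed. Such a reservoir is a standing threat: unless \textbf{P1} plays one of the edges needed by \textbf{P2} to complete a fourth page, \textbf{P2} wins on her next turn. Every round \textbf{P2} either extends the reservoir, or, if \textbf{P1} has just reached three pages on some \textbf{P1}-spine on the passive board, blocks one of the at most two candidate fourth pages there. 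If \textbf{P1} starts dismantling the reservoir, \textbf{P2} swaps the roles of the two boards and rebuilds a new reservoir on the formerly passive board using the unlimited supply of fresh vertices in $K_\omega$.

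The verification will then be an induction on rounds maintaining three invariants: (I1) no \textbf{P1}-spine on either board has four \textbf{P1}-pages; (I2) the current threat board carries at least one reservoir; (I3) the passive board has arbitrarily many vertices incident to no claimed edge. The inductive step splits into cases on \textbf{P1}'s last move, namely a harmless move, a move bringing a \textbf{P1}-spine to three pages, or a move attacking the reservoir. In each case I must exhibit a concrete \textbf{P2} response and check that all three invariants survive after the reply.

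The main obstacle, and the source of the substantial complication over \cite{Hefetz17}, will be the interaction between the two roles that each board plays for \textbf{P2}: her reservoir building on the active board must not inadvertently help \textbf{P1} complete a copy of $G$ there in combination with pre-existing \textbf{P1}-edges, and her blocking moves on the passive board must not deplete the reservoir faster than new threats can be generated, nor let the fresh-vertex pool shrink to the point of obstructing a future role-swap. Managing these interactions will require a delicate multi-layered case analysis and intricate bookkeeping of partial configurations, which is exactly the added difficulty that the argument is designed to overcome.
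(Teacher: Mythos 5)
Your outline does track the paper's architecture in spirit: $G$ is the book with a unique spine (the paper's \emph{base}) and four pages, \textbf{P2} assembles a spine with three pages on the board where he effectively moves first, and then issues an unbounded stream of one-move threats to fresh vertices that \textbf{P1} must keep answering, while any danger on the other board is neutralised. But as a proof the proposal has genuine gaps, and the parts you defer are precisely the parts that constitute the paper's argument. Most concretely, your blocking rule on the passive board is unsound: the trigger ``\textbf{P1} has just reached three pages'' together with ``blocks one of the at most two candidate fourth pages'' does not prevent a double threat. If \textbf{P1} reaches a spine $uv$ with two full pages and three half-pages $w_1,w_2,w_3$ (no spine yet has three pages, so invariant (I1) holds and your trigger is silent), his next move $vw_1$ yields three pages and two live completions $vw_2,vw_3$; blocking one loses to the other. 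Whether \textbf{P1} can actually reach such a position before your reservoir starts generating forcing threats is exactly a race/tempo question, and you give no accounting of it; the paper resolves it by proving, through the case tree of Section~\ref{sec4}, that \textbf{P2} can reach the hypotheses of Lemma~\ref{lem:2} (at most six \textbf{P1}-edges in $K^1$, at most five \textbf{P1}-edges in any copy of $G$ in $K^2$, and a \emph{potential base}), and the endgame analysis leans on those numerical bounds. Relatedly, nothing in your invariants rules out that \textbf{P1}'s forced replies to your threats assemble into his own copy of $G$; excluding this is the whole point of conditions (i) and (ii) in the definition of a potential base (no \textbf{P1}-triangle at the special vertex, no certain $4$-cycles) together with Remark~\ref{rem:conda}, and verifying that \textbf{P2} can secure such a base against all of \textbf{P1}'s second-move choices is the bulk of the work.

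Two further mechanisms in your plan do not survive scrutiny. A reservoir with three pages and two or more half-pages whose completing edges are unclaimed is an immediate \textbf{P2} win (a double threat), so \textbf{P1} will never permit ``many half-pages'' to coexist and invariant (I2) as stated is not maintainable; what actually happens is that each new half-page is a single threat that \textbf{P1} answers at once, which is the paper's stage of ``threats to fresh vertices'' rather than a stockpile. Conversely, \textbf{P1} cannot ``dismantle'' a reservoir, since \textbf{P2}'s edges are permanent and the threats go to an inexhaustible pool of fresh vertices, so the board swap is never forced; and if it were invoked it would be fatal, since rebuilding costs about seven tempi on the board where \textbf{P1} moved first and is already ahead. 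In short, the proposal reproduces the paper's three-stage skeleton but replaces its load-bearing components (the potential-base conditions, the edge-loss bookkeeping, and the exhaustive reachability analysis) with placeholders, and the specific rules it does commit to are either false or unmaintainable.
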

As mentioned in the previous section, one can also formulate the result for finite boards. The proof implies the following.
\begin{cor}
\label{cor:main}
In the game $\mathcal{R}(K_n\sqcup K_n,G)$ \textbf{P1} does not have a strategy which guarantees him a win in less than $2n-O(1)$ total moves, as $n\rightarrow\infty$.
\end{cor}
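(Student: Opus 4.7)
The plan is to transport the infinite-board drawing strategy produced in the proof of Theorem~\ref{th:main} onto the finite board $K_n\sqcup K_n$ and control for how many moves it remains executable. Concretely, Theorem~\ref{th:main} furnishes \textbf{P2} with an explicit strategy $\sigma$ on $K_\omega\sqcup K_\omega$ that prevents \textbf{P1} from ever completing a copy of $G$. Identifying the vertex set of $K_n\sqcup K_n$ with a prefix $\{1,\dots,n\}\sqcup\{1,\dots,n\}$ of $V(K_\omega\sqcup K_\omega)$, I would have \textbf{P2} play $\sigma$ verbatim on the finite board; the simulation is faithful at every step at which the move output by $\sigma$ has both endpoints in this prefix.

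The next step is a quantitative inspection of $\sigma$ to bound the number $V(t)$ of distinct vertices occurring in the position after $t$ total moves. Since \textbf{P1}'s $\lceil t/2\rceil$ moves contribute at most $2\lceil t/2\rceil\leq t+1$ fresh vertices, one needs only the structural property that \textbf{P2}'s responses live essentially within the subgraph already opened up by \textbf{P1}, with only an additive $O(1)$ contribution of their own. Granted such a vertex-efficiency estimate $V(t)\leq t+O(1)$, the constraint $V(t)\leq 2n$ yields that the simulation is executable for at least $t=2n-O(1)$ total moves. Throughout those moves \textbf{P1} cannot have built a copy of $G$, since \textbf{P2}'s moves on the finite board match those dictated by the infinite drawing strategy $\sigma$. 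Since this holds against any \textbf{P1} strategy, \textbf{P1} cannot guarantee a win in fewer than $2n-O(1)$ moves.

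The main obstacle is the verification of the vertex-footprint bound $V(t)\leq t+O(1)$ with the correct constant in front of $t$. Equivalently, one must certify that $\sigma$ introduces only $O(1)$ genuinely new vertices in total over the course of the entire game, as opposed to a positive rate of new vertices per move; any recurring need for fresh vertices at rate $c>0$ per \textbf{P2} move would degrade the bound to $4n/(2+c)-O(1)<2n-O(1)$. This strong reactivity of \textbf{P2}'s play is the feature one expects to read off from the explicit construction in the proof of Theorem~\ref{th:main}: because the strategy is locally designed around \textbf{P1}'s threats, \textbf{P2} confines himself to the subgraph generated by claimed edges apart from a bounded initialisation phase. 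Once this structural feature of $\sigma$ is extracted from the proof of Theorem~\ref{th:main}, the Corollary follows immediately.
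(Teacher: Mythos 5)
Your high-level plan (run the drawing strategy of Theorem~\ref{th:main} on the finite board and track how fast it consumes fresh vertices) is the right one, and it is essentially how the paper intends the corollary to be read off. But the quantitative claim on which your argument rests is false for the strategy actually constructed. You propose $V(t)\le t+O(1)$ by charging all linear growth to \textbf{P1} ($\le 2$ fresh vertices per \textbf{P1}-move) and asserting that \textbf{P2} ``confines himself to the subgraph already opened up by \textbf{P1}'' apart from an $O(1)$ initialisation. The opposite is true: the engine of \textbf{P2}'s strategy is an \emph{unbounded} sequence of threats $A_1F_1,A_1F_2,\dots$ to vertices $F_i$ that \textbf{P2} himself chooses to be free (stages one and three in the proof of Lemma~\ref{lem:2}, and likewise in the special end-cases of Section~\ref{s4.4}). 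So \textbf{P2} introduces one fresh vertex per move at a linear rate, i.e.\ your parameter $c$ equals $1$, and by your own computation the argument as written only delivers a bound of order $4n/3$, not $2n-O(1)$.

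The correct accounting pairs each fresh-vertex move with a \emph{forced} reply that uses no fresh vertex. When \textbf{P2} threatens with $A_1F_i$, \textbf{P1} must answer $A_0F_i$ --- an edge between two already-touched vertices --- or else \textbf{P2} immediately completes $G$ (in stage three) or advances a stage (which can happen only $O(1)$ times). Symmetrically, in the blocking stage \textbf{P1}'s threats $C_{\epsilon}D$ to fresh $D$ are answered by \textbf{P2} with $C_{1-\epsilon}D$ on already-touched vertices. Hence, outside $O(1)$ exceptional moves, each exchange of two moves consumes exactly one fresh vertex, drawn from the $n$ vertices of the single copy of $K_n$ in which that stage operates; exhausting that pool therefore takes $2n-O(1)$ total moves, during which \textbf{P1} has not won. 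Note also that your executability condition ``$V(t)\le 2n$'' is not the right one: the fresh vertices \textbf{P2} needs must lie in the specific copy $K^2$, so the two pools of size $n$ are not interchangeable, and the fact that your (incorrect) rate against the pool $2n$ gives the same numerical answer as the correct rate against the pool $n$ is a coincidence rather than a proof.
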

\begin{rem}
Note that we obtain an explicit drawing strategy for the first player in $\mathcal{R}(K_\omega,G)$.
\end{rem}
An amusing corollary of Theorem~\ref{th:main} is the following.
\begin{cor}
It is not the case that on the one hand for every graph $H$ $\mathcal{R}(K_\omega,H)$ is a \textbf{P1}-win and on the other hand for every graph $H$ if $\mathcal{R}(K_\omega,H)$ is a \textbf{P1}-win, then $\mathcal{R}(K_\omega\sqcup K_\omega,H)$ is also a \textbf{P1}-win.
\end{cor}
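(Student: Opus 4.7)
The plan is a one-line deduction from Theorem~\ref{th:main} by contradiction. I would assume that both bracketed statements hold simultaneously and derive a contradiction with the already-established Theorem~\ref{th:main}.

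More concretely, I would first apply statement~(1) with $H=G$, where $G=K_6\setminus K_4$ is the graph from the theorem, to conclude that $\mathcal{R}(K_\omega,G)$ is a \textbf{P1}-win. Then I would feed this conclusion into statement~(2), again with $H=G$: since the hypothesis of~(2) is now verified, its conclusion gives that $\mathcal{R}(K_\omega\sqcup K_\omega,G)$ is a \textbf{P1}-win.

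This, however, directly contradicts Theorem~\ref{th:main}, which asserts that $\mathcal{R}(K_\omega\sqcup K_\omega,G)$ is a draw. Hence at least one of (1) and (2) must fail, which is exactly the statement of the corollary.

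There is essentially no obstacle: the whole content is packaged into Theorem~\ref{th:main}, and the corollary is a purely logical unpacking of the single instance $H=G$. The only thing to note is that the corollary does \emph{not} specify which of (1) and (2) is false; as the authors remark, deciding between the two alternatives is itself an open problem and lies beyond what this argument can achieve.
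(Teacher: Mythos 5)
Your proposal is correct and is exactly the argument the paper intends (the corollary is stated there without a written proof, as an immediate consequence of Theorem~\ref{th:main}): instantiating both statements at $H=G$ yields that $\mathcal{R}(K_\omega\sqcup K_\omega,G)$ is a \textbf{P1}-win, contradicting the theorem's assertion that this game is a draw. Nothing is missing.
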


In the hypergraph setting we will work with the following hypergraph.
\begin{defn}
\label{def:G'}
Let $G'$ be the $4$-uniform hypergraph obtained from our graph $G$ as $V(G')=V(G)\sqcup \{X,Y\}$ and $E(G')=\{e\sqcup\{X,Y\},e\in E(G)\}$.
\end{defn}

Below is the main result on hypergraphs.
\begin{thm}
\label{th:hyper}
The game $\mathcal{R}\left(K_{\omega}^{(4)},G'\right)$ is a draw.
\end{thm}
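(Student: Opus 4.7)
The plan is to reduce the game $\mathcal{R}(K_\omega^{(4)},G')$ to the graph game $\mathcal{R}(K_\omega\sqcup K_\omega, G)$ of Theorem~\ref{th:main}, exploiting the cover of the board by projections that was previewed in the introduction. The key structural observation about $G'$ is that its two distinguished vertices $X,Y$ belong to every edge, so a copy of $G'$ in $K_\omega^{(4)}$ is specified by a ``pivot pair'' $p^*=\{X^*,Y^*\}$ together with a copy of $G$ in the \emph{projected graph} $K_\omega^{p^*}$ -- the complete graph on $V(K_\omega^{(4)})\setminus p^*$ in which the edge $\{u,v\}$ is identified with the hyperedge $\{X^*,Y^*,u,v\}$. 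Hence \textbf{P1} wins the hypergraph game if and only if, for some pair $p^*$, \textbf{P1}'s claimed hyperedges contain a copy of $G$ in $K_\omega^{p^*}$, and \textbf{P2}'s goal is to foil this simultaneously for every $p^*$.

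I would then fix a drawing strategy $\sigma$ for \textbf{P2} in $\mathcal{R}(K_\omega\sqcup K_\omega, G)$ provided by Theorem~\ref{th:main}, and design \textbf{P2}'s strategy in the hypergraph game so as to run $\sigma$ in parallel across the projections. Each hyperedge played is a graph edge in exactly the six projections $K_\omega^p$ with $p$ a subset of that hyperedge; for any two disjoint pairs $p_1,p_2$ the projections $K_\omega^{p_1}$ and $K_\omega^{p_2}$ share as hypergraph moves only the single hyperedge $p_1\cup p_2$, and otherwise form, modulo a vertex-overlap handled as a minor technicality, the board $K_\omega\sqcup K_\omega$. Thus $\sigma$ yields a virtual drawing strategy in each such pair of projections, and \textbf{P2} selects the actual hyperedge so as to be consistent with the prescribed virtual $\sigma$-responses in all ``active'' pairs.

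The main obstacle is the analysis: one must verify that at each step \textbf{P2} can find a single hyperedge serving as an acceptable $\sigma$-response in every projection that currently matters. My plan is (a) to activate a pair $p$ only once \textbf{P1} has claimed enough hyperedges containing $p$ to be near completing $G$ in $K_\omega^{p}$, so that only finitely many pairs are active at any finite time; (b) to exploit the flexibility of $\sigma$ (typically many acceptable moves) to reconcile the virtual responses across active pairs into a single hyperedge; and (c) to fall back on a ``safe'' hyperedge on fresh vertices whenever an unavoidable conflict arises, thereby buying time without advancing any copy of $G$ in any projection. The authors' own warning that the hypergraph case is ``much more difficult to analyse'' suggests that verifying compatibility is exactly the technical heart of the argument. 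Finally, Corollary~\ref{cor:hyper} then follows by the same standard compactness argument used to pass from Theorem~\ref{th:main} to its finite-board corollary.
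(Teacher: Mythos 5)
Your reduction idea --- run the drawing strategy $\sigma$ of Theorem~\ref{th:main} in parallel over the projections $K_\omega^{p}$ --- is not what the paper does, and as sketched it has gaps that I do not see how to close. First, each hyperedge \textbf{P1} plays is simultaneously a move in six projections, and there are infinitely many pairs $p$, so a single \textbf{P2} hyperedge per turn cannot serve as a $\sigma$-response on every two-copy board that matters; your fix (a), activating a pair only once \textbf{P1} is ``near completing $G$'' there, does not rescue this, because $\sigma$ is not a block-on-demand strategy: it must be run from the very first move (its guarantees hinge on \textbf{P2} effectively moving first in $K^2$ and on \textbf{P1} having at most six edges in $K^1$ at the end of stage one), and it provides no drawing guarantee from an arbitrary mid-game position in which \textbf{P1} already holds many edges of a $G$. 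Second, your fallback (c) --- playing a ``safe'' hyperedge on fresh vertices to buy time --- is incompatible with a strong-game draw: every such move is a free move for \textbf{P1} in whichever projection he is building, and the entire point of the paper's accounting (Lemma~\ref{lem:2}, condition~\ref{cond:b}: $e_{\textbf{P1}}(G)\le 5$ plus a bounded number of extra edges) is that \textbf{P2} cannot afford to concede even a couple of unanswered moves.

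What the paper actually does is construct one explicit four-stage strategy rather than multiplexing $\sigma$. In the first stage \textbf{P2} builds, in a handful of forced moves, the configuration of End-case~A.2.2 (a potential base $AC$) on a single board $K_\omega^{X,Y}$ chosen disjoint from \textbf{P1}'s first hyperedge $TUVW$ --- the higher rank lets \textbf{P2} skip the entire case tree of Section~\ref{sec4}, which is why only Lemma~\ref{lem:2} and Remark~\ref{rem:conda} are imported from the graph argument, not Theorem~\ref{th:main} itself. The remaining stages run the threat machinery of Lemma~\ref{lem:2} on the $XY$ board, and the real work is the verification that \textbf{P1} cannot assemble a $G'$ on \emph{any} board $HI$: this uses your (correct) observation that two projections with disjoint pivot pairs share exactly one hyperedge, together with degree counts showing that the only candidate centres are $T,U,C_0,C_1,X,Y,A_0$ and that each such board carries at most a star plus boundedly many extra \textbf{P1}-hyperedges. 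Your proposal correctly identifies the structural facts (the cover by projections, the single shared hyperedge) but leaves the actual strategy and the compatibility analysis --- which is the entire content of the theorem --- unspecified, so it does not constitute a proof.
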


As before, the proof implies the following version of the result for finite boards.
\begin{cor}
\label{cor:hyper}
In the game $\mathcal{R}\left(K_n^{(4)},G'\right)$ \textbf{P1} does not have a strategy which guarantees him a win in less than $2n-O(1)$ total moves, as $n\rightarrow\infty$.
\end{cor}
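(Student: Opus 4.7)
The plan is to reduce the hypergraph game to the graph game of Corollary~\ref{cor:main} by means of the cover $K_n^{(4)}=\bigcup_{X\neq Y}K_{n-2}^{X,Y}$ highlighted in the introduction. The structural observation that drives the proof is that, by Definition~\ref{def:G'}, any copy of $G'$ inside $K_n^{(4)}$ is determined by a pair $\{X,Y\}$ of \emph{universal} vertices together with a copy of $G$ in the link $K_{n-2}^{X,Y}$. Hence giving \textbf{P2} a drawing strategy in $\mathcal{R}(K_n^{(4)},G')$ valid for at least the first $2n-O(1)$ moves amounts to preventing \textbf{P1} from completing a copy of $G$ in the link of any pair of vertices within that many moves.

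For the strategy itself, \textbf{P2} fixes two disjoint pairs $\{X_1,Y_1\}$ and $\{X_2,Y_2\}$ of distinguished vertices and sets $V'=V(K_n^{(4)})\setminus\{X_1,Y_1,X_2,Y_2\}$. She identifies each sub-board $B_i=\{\{X_i,Y_i,a,b\}:\{a,b\}\subseteq V'\}$ with a copy of $K_{n-4}$ via the map $\{X_i,Y_i,a,b\}\leftrightarrow\{a,b\}$, so that $B_1\sqcup B_2\cong K_{n-4}\sqcup K_{n-4}$ with target $G$. She then plays the explicit drawing strategy for $\mathcal{R}(K_{n-4}\sqcup K_{n-4},G)$ provided by Corollary~\ref{cor:main}: when \textbf{P1} plays inside $B_1\sqcup B_2$ she responds with the graph-strategy move, and when \textbf{P1} plays outside she plays an auxiliary hyperedge chosen according to the analysis, exploiting that each single hyperedge lies simultaneously in $\binom{4}{2}=6$ pair-links and so can block danger on several fronts at once.

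The main obstacle is in the analysis, as the introduction warns. Corollary~\ref{cor:main} directly rules out copies of $G$ in the $V'$-restricted links of $\{X_1,Y_1\}$ and $\{X_2,Y_2\}$ within the first $2(n-4)-O(1)=2n-O(1)$ moves, which is exactly the desired bound. The real work is to extend this conclusion to copies of $G'$ whose universal pair $\{X,Y\}$ is \emph{arbitrary}, since for a generic pair most hyperedges of $K_{n-2}^{X,Y}$ lie outside $B_1\cup B_2$ and are not directly regulated by the graph strategy. This will require a delicate casework on how $\{X,Y\}$ meets $\{X_1,Y_1,X_2,Y_2\}$, combined with constraints coming from the specific edge set of $G=K_6\setminus K_4$ and the control that \textbf{P2}'s auxiliary moves exert on every link simultaneously. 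This is precisely where, as the authors note, the strategy is easy to describe but ``much more difficult to analyse'' than in the graph setting, and finishing this casework within the $2n-O(1)$-move window is the crux of the proof.
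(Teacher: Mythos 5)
Your proposal identifies the right covering $K_n^{(4)}=\bigcup K_{n-2}^{X,Y}$, but it stops exactly where the proof has to begin, and the reduction you set up does not match how the paper actually argues. Two concrete problems. First, the simulation of $\mathcal{R}(K_{n-4}\sqcup K_{n-4},G)$ on $B_1\sqcup B_2$ is not well-defined: the graph strategy of Section~\ref{sec4} is entirely reactive to \textbf{P1}'s moves inside the two copies (its very first step is a response to \textbf{P1}'s first edge in $K^1$), whereas in the hypergraph game \textbf{P1}'s first hyperedge $TUVW$ generically lies in neither $B_1$ nor $B_2$, and \textbf{P1} may never play in $B_1\cup B_2$ at all. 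Your ``auxiliary hyperedge chosen according to the analysis'' is doing all the work here and is never specified. Second, you explicitly defer the control of copies of $G'$ with an arbitrary universal pair $\{X,Y\}$ to ``delicate casework'' that you do not carry out; but that is not a finishing touch, it is the entire content of the theorem, since nothing in Corollary~\ref{cor:main} constrains boards other than the two you distinguished.

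The paper's route is genuinely different and avoids both issues. \textbf{P2} does not simulate the two-copy graph game: he picks a single pair $\{X,Y\}$ disjoint from $TUVW$ and builds the core of $G$ on the $XY$ board directly, by a short explicit sequence of moves (ending in the configuration of End-case A.2.2), which is possible precisely because the abundance of free vertices in high rank makes the elaborate case tree of Section~\ref{sec4} unnecessary. The six boards through $TUVW$ collectively play the role of $K^1$. The draw is then obtained by establishing that $AC$ is a potential base with a quantitative extra property (at most $2$ of \textbf{P1}'s hyperedges contain the special vertex but not the other endpoint), running the threat-making stages of Lemma~\ref{lem:2}, and ruling out a \textbf{P1}-copy of $G'$ on every board $HI$ via the key combinatorial facts that two boards with disjoint pairs share exactly one hyperedge, that a star in one $2$-uniform board is a star in every board, and degree counts at the vertices $F_j$, $T$, $U$, $C_0$, $C_1$. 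None of this is recoverable from your sketch, so the proposal has a genuine gap rather than being an alternative proof.
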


\subsection{Overview}
\label{sec:overview}

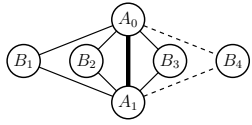
\begin{figure}  
\begin{center}\scalebox{.55}{
\begin{tikzpicture}
  \SetGraphUnit{3}
\GraphInit[vstyle=Normal]
  \SetVertexNormal[Shape      = circle,
                   LineWidth  = 1pt]
  \SetVertexMath
  \Vertex[L=B_2]{C}
  \Vertex[x=1,y=1,L=A_0]{B}
  \Vertex[x=1,y=-1,L=A_1]{A}
  \Vertex[x=2,y=0,L=B_3]{D}
  \Vertex[x=-1.5,y=0,L=B_1]{E}
  \Vertex[x=3.5,y=0,L=B_4]{F}
  
  \Edge(A)(C)
  \Edge(A)(D)
  \Edge(B)(C)
  \Edge(B)(D)
  \Edge(A)(E)
  \Edge(B)(E)
  \SetUpEdge[style={dashed}]
  \Edge(A)(F)
  \Edge(B)(F)
  \SetUpEdge[lw = 3pt]
  \Edge(A)(B)
\end{tikzpicture}}
\end{center}
\caption{The graph $G$. The dashed lines form the pair, the solid lines form the core and the base is thickened.}
 \label{fig:G}
\end{figure}

\paragraph{Graph setting}
Throughout the paper, unless otherwise stated, all configurations are considered after \textbf{P1}'s move and before \textbf{P2}'s move. The graph $G$ is formed by a \emph{base} and four \emph{pairs} of edges connected to it (see Figure~\ref{fig:G}). We decompose the graph $G$ into a pair and a \emph{core} which is formed by the base and the other three pairs.

We construct the drawing strategy for \textbf{P2} in the game $\mathcal{R}(K_\omega\sqcup K_\omega,G)$ in three stages. We denote the copy of $K_\omega$ in which \textbf{P1} takes his first edge by $K^1$ and the other copy by $K^2$.

In the first stage \textbf{P2} builds a core in $K^2$. While doing this \textbf{P2} ensures that the following two statements hold. On the one hand, during the entire stage \textbf{P2} remains ahead of \textbf{P1} in building $G$ in $K^2$, which is intuitively possible, since \textbf{P2} is the first player in $K^2$. In particular, at the end of the first stage \textbf{P1} does not have a threat in $K^2$. On the other hand, at the end of the stage \textbf{P1} has at most $|E(G)|-1$ edges in $K^1$. At the beginning of the second stage \textbf{P2} checks if \textbf{P1} has a threat in $K^1$. If this is the case, \textbf{P2} blocks \textbf{P1}'s (possibly infinite) threats as long as \textbf{P1} keeps making new ones in $K^1$. The graph $G$ is such that \textbf{P1} cannot force a win by making such consecutive threats. If at some point \textbf{P1} does not have a threat in $K^1$, then, in the third stage, \textbf{P2} aims to build the pair in $K^2$  and complete $G$. He does so by making a (possibly infinite) series of threats from a well-chosen endpoint of the base to new vertices. The choice is such that \textbf{P1}'s responses to \textbf{P2}'s threats cannot be part of a threat of \textbf{P1}.

Roughly speaking, in~\cite{Hefetz17} the strategy is divided into the same three stages: building a core, blocking threats and then making threats. However, there is a severe obstruction in transferring the first stage of the strategy from $5$-uniform hypergraphs to graphs. Indeed, in~\cite{Hefetz17} the target hypergraph has a non-trivial and identifiable core that admits a weak game fast building strategy (with number of moves equal to the size of the core). The construction of such a core relies heavily on the high rank. Unfortunately, for graphs this is not feasible anymore because the construction of any non-trivial identifiable core can be delayed by \textbf{P1}. Therefore, in the graph setting one should abandon such weak game strategy and face the essential difficulty of strong game strategies, by allowing delay and making sure to block the other player while building one's own target graph.

\paragraph{Hypergraph setting}
As it was discussed in the introduction, the board $K_\omega^{(4)}$ can be viewed as 
$\bigcup_{X,Y\in V\left(K_\omega^{(4)}\right)}K_\omega^{X,Y}$,
where $K_\omega^{X,Y}=\{e\in E(K_\omega^{(4)}), X,Y\in e\}$ identifies naturally with the board $K_\omega$. Note that a copy of $G'$ is contained in exactly one of the boards $K_\omega^{X,Y}$ and identifies with a graph $G$ in this board.

The drawing strategy of \textbf{P2} in the game $\mathcal{R}(K_\omega^{(4)},G')$ still follows the same three stages. In the first stage \textbf{P2} builds a core in a board $K_\omega^{X,Y}$ (corresponding to $K^2$ from the graph setting) disjoint from \textbf{P1}'s first hyperedge by using a very simplified version of the first stage of the strategy for $\mathcal{R}(K_\omega\sqcup K_\omega, G)$, which exploits the larger number of symmetries in higher rank. \textbf{P2} ensures that at the end of the first stage \textbf{P1} does not have a threat in any board $K_\omega^{X,Y}$, except possibly exactly one of the six boards containing \textbf{P1}'s first hyperedge. The second and third stages remain unchanged. However, in the hypergraph setting we need a more subtle analysis than in the graph setting, as the different boards intersect. The key observation is that for $\{X_1, Y_1\} \cap \{X_2, Y_2\} = \varnothing$ the boards $K_\omega^{X_1,Y_1}$ and $K_\omega^{X_2,Y_2}$ share only one hyperedge. In this respect it is clear that higher rank simplifies the problem.

\subsection{Organisation of the paper}
The rest of the paper is organised as follows. In Section~\ref{sec3} we establish the key lemmas which are used repeatedly in the proof. In Section~\ref{sec4} we construct a drawing strategy for \textbf{P2} in $\mathcal{R}(K_\omega\sqcup K_\omega,G)$. In Section~\ref{sechyp} we derive a drawing strategy for \textbf{P2} in $\mathcal{R}\left(K_\omega^{(4)},G\right)$. In Section~\ref{sec:remarks} we make some concluding remarks and state some open questions.

\section{Preliminaries}
\label{sec3}
\subsection{Setup and notation}

For this section and the next one we consider the game $\mathcal{R}(K_\omega\sqcup K_\omega,G)$, where the graph $G$ is defined below. Denote the two disjoint copies of $K_\omega$ constituting the board by $K^1$ and $K^2$ and assume without loss of generality that \textbf{P1}'s first edge is taken in $K^1$.

For technical reasons, we consider the version of the game in which \textbf{P1} stops playing after a finite number of moves, and the game continues until \textbf{P2} builds a copy of $G$. In order to show that \textbf{P2} has a drawing strategy, it is enough to show that at the end \textbf{P1} does not have a copy of $G$.

\begin{defn}
{
    \def\OldComma{,}
    \catcode`\,=13
    \def,{%
      \ifmmode%
        \OldComma\discretionary{}{}{}%
      \else%
        \OldComma%
      \fi%
    }%

Let $G=(V, E)$ be the graph with vertex set $V= \{ A_0, A_1, B_1, B_2, B_3, B_4 \}$ and edge set $E= \{A_0A_1, A_0B_i, A_1B_i \mid i=1, \hdots, 4 \}$ (see Figure~\ref{fig:G}). With a slight abuse of notation we shall refer to any isomorphic copy of $G$ as $G$.

We call the edge $A_0A_1$ the \emph{base} of $G$. Note that any automorphism of $G$ fixes the base, so it is well defined.
}
\end{defn}

For the following set of definitions we consider the game at a certain stage. Recall from Section~\ref{sec:overview} that all configurations are considered after \textbf{P1}'s move and before \textbf{P2}'s move.

\begin{defn}
We say that a given graph $G$ is \textbf{P1}-\emph{free} if \textbf{P1} has no edges in $G$ and we say it is \textbf{P2}-\emph{free} if \textbf{P2} has no edges in $G$.
\end{defn}

\begin{defn}
For a given graph $G$, let $e_{\textbf{P1}}(G)$ be the number of edges in $G$ taken by \textbf{P1} if $G$ is \textbf{P2}-free; otherwise, define it to be $0$. Define similarly $e_{\textbf{P2}}(G)$. For a given vertex $A$ let $\deg_{\textbf{P1}}(A)$ and $\deg_{\textbf{P2}}(A)$ be the number of edges that contain $A$ taken by \textbf{P1} and \textbf{P2}, respectively. 
\end{defn}
Note that in the course of the game for a given $G$ the quantities $e_{\textbf{P1}}(G)$ and $e_{\textbf{P2}}(G)$ may decrease, becoming $0$.

\begin{defn}
We say that a vertex $F$ is \textbf{P1}-\emph{free} if $\deg_{\textbf{P1}}(F)=0$, we say it is \textbf{P2}-\emph{free} if $\deg_{\textbf{P2}}(F)=0$ and we say it is \emph{free} if $\deg_{\textbf{P1}}(F)=\deg_{\textbf{P2}}(F)=0$.
\end{defn}

\begin{defn}
\label{def:potential}
We call an edge $A_0A_1$ taken by \textbf{P2} in $K^2$ a \emph{potential base} if there exist two vertices $B_1,B_2$ in $K^2$ such that \textbf{P2} has the edges $A_0B_1$, $A_0B_2$, $A_1B_1$ and $A_1B_2$ and there exists a special vertex $X \in \{ A_0, A_1 \}$ such that:
\begin{enumerate}[label=(\roman*)]
\item \label{cond:i}\textbf{P1} does not have a triangle $X,T_1,T_2$;
\item \label{cond:ii} \textbf{P1} does not have a $4$-cycle $X, C_1, C_2, C_3$ with the edge $X C_2$ not taken by \textbf{P2}.
\end{enumerate}
\end{defn}

\begin{rem}
\label{rem:conda}
If the edge $A_0A_1$ is a potential base with special vertex $X$, then if later in the game \textbf{P1} constructs an additional star $X F_1, X F_2, \hdots$ from $X$ to \textbf{P1}-free vertices $F_1, F_2, \hdots$ together with exactly $r$ extra edges, then he has at most $r$ triangles sharing a common edge that contain $X$.
\end{rem}

\subsection{Lemmas}
In this section we shall present the main technical result, Lemma~\ref{lem:2}, which states that if the game has reached certain configurations, then \textbf{P2} has a drawing strategy. This result will be used recurrently in conjunction with the strategy constructed in Section~\ref{sec4}. In order to check one of the hypotheses of Lemma~\ref{lem:2} we also establish Lemma~\ref{lem:3}.

\begin{lem}\label{lem:2}
Assume that before \textbf{P2}'s turn, the game has the following properties:
\begin{enumerate}[label=(\alph*)]
 \item \label{cond:c}\label{cond:bi} \textbf{P1} has at most six edges in $K^1$;
 \item \label{cond:bii}\label{cond:b} for any $G$ in $K^2$, $e_{\textbf{P1}}(G) \leq 5$;
 \item\label{cond:a} \textbf{P2} has a potential base $A_0A_1$.
\end{enumerate}
Then, \textbf{P2} has a drawing strategy.
\end{lem}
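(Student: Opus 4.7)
The plan is to construct a drawing strategy for \textbf{P2} in two sub-phases: first extending the potential base (base plus two pairs) to a core (base plus three pairs) in $K^2$, then producing a perpetual stream of threats from $X$ to force \textbf{P1} into purely defensive play. Let $\bar X$ denote the base endpoint other than $X$.

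In sub-phase I, \textbf{P2} picks a free vertex $F$ in $K^2$ and plays $XF$, intending to complete a third pair on the next turn by playing $\bar X F$. Since by hypotheses~\ref{cond:bi} and~\ref{cond:bii} \textbf{P1} starts with at most six edges in $K^1$ and at most five edges in any copy of $G$ in $K^2$, \textbf{P1}'s immediate reply cannot be a one-move $G$-threat. So \textbf{P2}'s rule is simple: if \textbf{P1} blocked with $\bar X F$, pick a fresh free vertex and repeat; if \textbf{P1} played anything else, play $\bar X F$ and complete a third pair, entering sub-phase II. If \textbf{P1} blocks forever, the game is already a draw, since blocks only contribute star edges at $\bar X$ which, together with \textbf{P1}'s initial edges, never amount to a full $G$; otherwise sub-phase I terminates in finitely many turns with \textbf{P2} holding a core and \textbf{P1} still several moves short of any threat.

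In sub-phase II each turn \textbf{P2} either blocks a fresh \textbf{P1}-threat or plays $XF$ for a new free vertex $F$, producing an eight-edge subgraph of a copy of $G$ missing only $\bar X F$. \textbf{P1} must then either block at $\bar X F$ or raise a counter-threat. By Remark~\ref{rem:conda} together with the potential base conditions~\ref{cond:i} and~\ref{cond:ii}, every extra edge \textbf{P1} uses to create a triangle through $X$ is bought at full price: a $G$ in $K^2$ using $X$ as a base endpoint requires at least four such triangles and hence four offensive extra edges on top of a substantial star at $X$, while by hypothesis~\ref{cond:bi} \textbf{P1}'s $K^1$-position also needs several further offensive moves to ripen into a threat. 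These finite budgets imply that \textbf{P1}'s counter-threats eventually run out, after which the infinite alternation $XF_i/\bar X F_i$ closes out the draw.

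The main obstacle is the bookkeeping in sub-phase II: one must verify carefully that the budgets from Remark~\ref{rem:conda} and hypothesis~\ref{cond:bi}, summed across $K^1$ and $K^2$ and across \textbf{P1}'s various move types (defensive blocks at $\bar X$, offensive star moves at $X$, and extra edges elsewhere), really do bound the total number of counter-threats. A further subtlety is ruling out \textbf{P1}-copies of $G$ in $K^2$ that avoid $X$ as a base endpoint altogether; for these, \textbf{P1}'s defensive star at $\bar X$ supplies at most one pair edge per copy of $G$, so hypothesis~\ref{cond:bii} still leaves \textbf{P1} several moves short unless he pays further offensive moves from the same budget.
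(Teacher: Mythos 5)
Your overall architecture (complete the third pair, then stream threats through the base to pin \textbf{P1}) is the same as the paper's, but the orientation of your threats is fatally reversed. You have \textbf{P2} play $XF$ from the \emph{special} vertex so that \textbf{P1}'s forced blocks accumulate as a star at $\bar X$. The potential-base conditions \ref{cond:i}, \ref{cond:ii} and Remark~\ref{rem:conda} control triangles only at the special vertex $X$; they say nothing about $\bar X$, and condition \ref{cond:b} permits \textbf{P1} to start with five edges $\bar XZ$, $\bar XP_1$, $ZP_1$, $\bar XP_2$, $ZP_2$ towards a copy of $G$ based at $\bar X$. Against your sub-phase I, \textbf{P1} blocks $\bar XF_1$ and $\bar XF_2$ (reaching seven edges of the copy with base $\bar XZ$ and pairs $P_1,P_2,F_1,F_2$), then answers $XF_3$ with $ZF_1$; your rule makes \textbf{P2} reply $\bar XF_3$, and \textbf{P1} wins with $ZF_2$. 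This also falsifies your closing claim that the defensive star at $\bar X$ supplies at most one pair edge per copy of $G$: for a copy based at $\bar X$ it can supply four. The paper avoids this by playing the probes from $A_1$ and letting the completion edge, hence \textbf{P1}'s blocking star, sit at the special vertex $A_0$, where Remark~\ref{rem:conda} converts each extra (non-star) \textbf{P1} edge into at most one triangle at $A_0$, so four triangles cost four tempi \textbf{P1} does not have.

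A second structural omission: you issue genuine threats without first neutralising $K^1$. Condition \ref{cond:bi} allows six \textbf{P1} edges in $K^1$, and \textbf{P1} collects two further unconstrained moves around the completion of the third pair; that is enough to reach eight edges of a $G$ in $K^1$, after which he answers \textbf{P2}'s first real threat by simply winning there. The paper's proof devotes its entire second stage (the Claim with Cases I--III, including the sustained $C_{\epsilon_n}D_n$ / $C_{1-\epsilon_n}D_n$ exchange) to detecting and defusing exactly this before any threat is made. Neither of these issues is ``bookkeeping'': the first requires reversing the roles of $X$ and $\bar X$ throughout, and the second requires inserting a blocking stage that your two-phase plan has no room for.
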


\begin{proof}
Let $\mathcal{E}^1_0$ and $\mathcal{E}^2_0$ be the set of initial edges of \textbf{P1} in $K^1$ and $K^2$ respectively. Without loss of generality let $A_0$ be the special vertex of the potential base $A_0A_1$. The line of play of \textbf{P2} is divided in three stages as follows. In the first stage, \textbf{P2} takes edges from $A_1$ to free vertices $F_i$ for $i=1,\ldots,k$ until at some point \textbf{P1} does not take the edge $A_0F_k$ (this necessarily happens, since \textbf{P1} makes only a finite number of moves). Then \textbf{P2} takes $A_0F_k$. Let $\mathcal{E}^2_1=\{A_0F_i,1\le i<k\}$ be the star taken by \textbf{P1} and $\mathcal{E}_2$ be the set consisting of the last two edges taken by \textbf{P1} (if he did not stop playing).\\ 

\noindent\textbf{Claim.} One can guarantee that after a finite number of moves, called second stage, the configuration (before \textbf{P2}'s move) satisfies the following two properties.
\begin{itemize}
    \item For each $G$ in $K^1$ we have that $e_\textbf{P1}(G)\leq 7$.
    \item The set of edges \textbf{P1} has taken in $K^2$ is $\mathcal{E}_0^2\sqcup\mathcal{E}_1^2\sqcup\mathcal{E}^2$ for some $\mathcal{E}^2$ with $|\mathcal{E}^2|\le 2$.
\end{itemize}
\begin{proof}[Proof of the claim]
We consider three cases for the configuration at the end of the first stage (before \textbf{P2}'s move).\\

\noindent\textbf{Case I.} \textbf{P1} has exactly $8$ edges in $K^1$ forming a copy $G^1$ of $G$ with its base $C_0C_1$ present but the edge $C_0D_1$ absent.

As $|\mathcal{E}_0^1| \le 6$, we have that $\mathcal{E}_2$ is contained in $G^1$. \textbf{P2} takes the edge $C_0D_1$ and, while \textbf{P1} keeps taking edges $C_{\epsilon_n}D_n$ to \textbf{P1}-free vertices $D_n$, \textbf{P2} keeps taking the edges $C_{1-\epsilon_n}D_n$ for $\epsilon_n \in \{0,1\}$. Eventually \textbf{P1} either stops or takes a different kind of edge $\varepsilon$. Let $\mathcal{E}=\{\varepsilon\}$. Note that $e_\textbf{P1}(G)\leq 7$ for all $G$ in $K^1$. Indeed, this immediately follows from the fact that for all vertices $C \in V(K^1)\setminus \{C_0, C_1\}$ we have that $\deg_\textbf{P1}(C) \le 3$.\\

\noindent\textbf{Case II.} \textbf{P1} has exactly $8$ edges in $K^1$ forming a copy $G^1$ of $G$ without its base $C_0C_1$.

As $|\mathcal{E}_0^1| \le 6$, we have that $\mathcal{E}_2$ is contained in $G^1$. \textbf{P2} takes $C_0C_1$ and then \textbf{P1} takes an edge $\varepsilon$. Let $\mathcal{E}=\{\varepsilon\}$. Note that $e_\textbf{P1}(G)\le 7$ for all $G$ in $K^1$.\\

\noindent\textbf{Case III.} For any $G$ in $K^1$, $e_{\textbf{P1}}(G)\le 7$.

In this case we directly skip to the third stage and set $\mathcal{E}=\mathcal{E}_2$.\\

In all cases, let $\mathcal{E}^2=\mathcal{E} \cap K^2$ and note that the set of edges \textbf{P1} has taken in $K^2$ is $\mathcal{E}_0^2\sqcup\mathcal{E}_1^2\sqcup\mathcal{E}^2$, with $|\mathcal{E}^2| \le 2$, as claimed.
\end{proof}
In the third stage \textbf{P2} takes edges from $A_1$ to free vertices $F_{i}$ for $k<i\le l$ until at some point \textbf{P1} does not take the edge $A_0F_{l}$. Then \textbf{P2} takes $A_0F_l$ and constructs a $G$. The game stops after \textbf{P2}'s move. Let $\mathcal{E}^2_3=\{A_0F_i,k<i<l\}$ and $\mathcal E_4$ be the set consisting of the last edge taken by \textbf{P1} (if he did not stop playing).\\ 

\noindent\textbf{Claim. }At the end of the third stage \textbf{P1} does not have a $G$.\\

\begin{proof}[Proof of the claim]
Notice that by the first Claim for any $G$ in $K^1$ we have $e_\textbf{P1}(G)\le 7+1$, since $|\mathcal E_4|\le 1$. We claim that the same holds in $K^2$. Indeed, \textbf{P1} cannot have a $G$ in $K^2$ that does not intersect $\mathcal{E}_1^2\cup\mathcal{E}_3^2$, as $e_{\textbf{P1}}(G) \leq 5+3$ by \ref{cond:bii} and $|\mathcal E_4|+|\mathcal E^2|\le 3$. On the other hand, \textbf{P1} cannot have a $G$ in $K^2$ that contains some $A_0F_i$, as neither of the vertices $A_0$ and $F_i$ can be in the base. This is because \textbf{P1} has at most $3$ triangles sharing an edge that contain vertex $A_0$ by Remark~\ref{rem:conda}, and $\deg_{\textbf{P1}}(F_i) \le 1+3$.
\end{proof}
Clearly, the last claim implies that \textbf{P2} has a drawing strategy.
\end{proof}

Note that \textbf{P2} can also adopt a slightly different strategy in the first stage. Say that \textbf{P2} has the triangle $A_0A_1B_1$ and assume that \textbf{P2} takes the edges from $A_1$ to free vertices $F_i$ for $i=1,\dots,5$, while \textbf{P1} takes the edges $A_0F_i$. Then \textbf{P2} can win in three moves by taking three of the five edges $B_1F_i$. Thus, in the original version of the game $\mathcal{R}(K_\omega\sqcup K_\omega,G)$ (in which \textbf{P1} does not stop playing after a finite time) \textbf{P2} can always build the core of $G$ in his drawing strategy, as claimed in Section~\ref{sec:overview}.

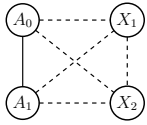
\begin{figure}
\begin{center}\scalebox{.55}{
\begin{tikzpicture}
  \SetGraphUnit{3}
\GraphInit[vstyle=Normal]
  \SetVertexNormal[Shape      = circle,
                   LineWidth  = 1pt]
  \SetVertexMath
  \Vertex[x=1,y=1,L=A_0]{A}
  \Vertex[x=1,y=-1,L=A_1]{B}
  \Vertex[x=3.5,y=-1,L=X_2]{Y}
  \Vertex[x=3.5,y=1,L=X_1]{X}
  \SetVertexNoLabel
  
  \Edge(A)(B)
  \SetUpEdge[style={dashed}]
  \Edge(A)(X)
  \Edge(B)(Y)
  \Edge(X)(Y)
  \Edge(A)(Y)
  \Edge(B)(X)
\end{tikzpicture}}
\end{center}
\vspace{-0.5cm}
\caption{The $2\Delta$-configuration.}
\label{fig:lem3}
\end{figure}
We next provide a quick way to check that condition \ref{cond:a} of Lemma~\ref{lem:2} holds.

\begin{defn}
At a certain stage of the game, and for a given edge $A_0A_1$ taken by \textbf{P2} in $K^2$, we call an edge taken by \textbf{P1} \emph{good for $A_0A_1$} if every $4$-cycle $A_i,C_1,C_2,C_3$ with $A_iC_2$ not taken by \textbf{P2} and every triangle $A_i,C_1,C_2$ that contain this edge also contains an edge taken by \textbf{P2}. In particular, all edges of \textbf{P1} in $K^1$ are good for any edge of \textbf{P2} in $K^2$. We call \emph{bad for $A_0A_1$} an edge of \textbf{P1} in $K^2$ that is not good for $A_0 A_1$.
\end{defn}
Equivalently, an edge $X_1X_2$ is good for $A_0A_1$ if and only if it is in $K^1$ or
\begin{equation}
\label{rem35}\{X_1, X_2\}\cap\{A_0, A_1 \}=\varnothing, \qquad \text{ \textbf{P2} has $A_0X_1$ or $A_0X_2$},\qquad\text{ \textbf{P2} has $A_1X_1$ or $A_1X_2$}.\tag{*}
\end{equation}
We call a $2\Delta$-configuration for an edge $A_0A_1$ the edges $A_0X_1, A_0X_2, A_1X_1, A_1X_2,  X_1X_2$ (see Figure~\ref{fig:lem3}) for any two vertices $X_1,X_2$ disjoint from $A_0,A_1$.
\begin{lem}
\label{lem:3}
Assume that before his turn \textbf{P2} has an edge $A_0A_1$ in $K^2$ such that:
\begin{enumerate}[label=(\alph*)]
 \item\label{cond:5bad} \textbf{P1} has at most $5$ bad edges for $A_0A_1$;
 \item\label{cond:2d} \textbf{P1} does not have a $2\Delta$-configuration for $A_0A_1$; 
 \item\label{cond:base} \textbf{P2} has edges $A_0B_1$, $A_0B_2$, $A_1B_1$, $A_1B_2$ for some vertices $B_1,B_2$.
\end{enumerate}
Then $A_0A_1$ is a potential base i.e. condition \ref{cond:a} of Lemma~\ref{lem:2} holds.
\end{lem}

\begin{proof}
Assume for a contradiction that $A_0A_1$ is not a potential-base. Let us call a triangle or $4$-cycle as in Definition~\ref{def:potential} an \emph{obstruction} for $X$ ($X\in\{A_0,A_1\}$). By condition \ref{cond:base}, \textbf{P1} has an obstruction for each of $A_0$ and $A_1$. Yet, since \textbf{P2} has $A_0A_1$, a single obstruction cannot contain both $A_0$ and $A_1$. So \textbf{P1} has two separate obstructions, one containing $A_0$ but not $A_1$, and one containing $A_1$ but not $A_0$. The obstruction containing $A_0$ has degree $2$ at $A_0$, so the obstruction containing $A_1$ has at most $5-2$ edges by condition~\ref{cond:5bad}, since all edges in an obstruction are bad. Thus, both obstructions are triangles, they contain $A_0$ and $A_1$ and they share a common edge by condition \ref{cond:5bad}, which contradicts condition~\ref{cond:2d}, as $A_0A_1$ is taken by \textbf{P2}.
\end{proof}

\section{A drawing strategy}\label{sec4}
\subsection{Setting}
\begin{figure}
\begin{center}\scalebox{.55}{
\begin{tikzpicture}
  \SetGraphUnit{3}
\GraphInit[vstyle=Normal]
  \SetVertexNormal[Shape      = circle,
                   LineWidth  = 1pt]
  \SetVertexMath
  \Vertex[x=-8,y=-1,L=1]{A}
  \Vertex[x=8,y=-1,L=2]{B}
  \Vertex[x=-6,y=-2,L=2]{A2}
  \Vertex[x=-11,y=-3,L=1]{A11}
  \Vertex[x=-9,y=-3,L=2]{A12}
  \Vertex[x=-7,y=-3,L=1]{A21}
  \Vertex[x=-5,y=-3,L=2]{A22}
  \Vertex[x=3,y=-2,L=1]{B1}
  \Vertex[x=13,y=-2,L=2]{B2}
  \Vertex[x=9,y=-3,L=2]{B12}
  \Vertex[x=12,y=-3,L=1]{B21}
  \Vertex[x=14,y=-3,L=2]{B22}
  \Vertex[x=2,y=-4,L=2]{B112}
  \Vertex[x=7,y=-4,L=1]{B121}
  \Vertex[x=11,y=-4,L=2]{B122}
  \Vertex[x=-8,y=-5,L=1]{B1111}
  \Vertex[x=-4,y=-5,L=2]{B1112}
  \Vertex[x=0,y=-5,L=1]{B1121}
  \Vertex[x=4,y=-5,L=2]{B1122}
  \Vertex[x=-9,y=-6,L=1]{B11111}
  \Vertex[x=-7,y=-6,L=2]{B11112}
  \Vertex[x=-5,y=-6,L=1]{B11121}
  \Vertex[x=-3,y=-6,L=2]{B11122}
  \Vertex[x=2,y=-6,L=2]{B11212}
  \Vertex[x=4,y=-6,L=1]{B12111}
  \Vertex[x=6,y=-6,L=2]{B12112}
  \Vertex[x=10,y=-6,L=2]{B12122}
  \Vertex[x=-6,y=-7,L=1]{B111211}
  \Vertex[x=-4,y=-7,L=2]{B111212}
  \Vertex[x=-3,y=-7,L=1]{B112111}
  \Vertex[x=-1,y=-7,L=2]{B112112}
  \Vertex[x=1,y=-7,L=1]{B112121}
  \Vertex[x=3,y=-7,L=2]{B112122}
  \Vertex[x=7,y=-7,L=1]{B121211}
  \Vertex[x=9,y=-7,L=2]{B121212}
  \SetVertexNormal[Shape = regular polygon,
  					LineWidth=1pt]
  \tikzset{VertexStyle/.append style =
{regular polygon sides=3, inner sep=0pt}}
  \Vertex[x=-10,y=-2,L=1]{A1}
  \Vertex[x=-2,y=-3,L=1]{B11}
  \Vertex[x=9,y=-5,L=2]{B1212}
  \SetVertexNormal[Shape = rectangle,
  					LineWidth = 1pt]
  \Vertex[x=-6,y=-4,L=1]{B111}
  \Vertex[x=5,y=-5,L=1]{B1211}
  \Vertex[x=-2,y=-6,L=1]{B11211}
  \Vertex[x=8,y=-6,L=1]{B12121}
  \SetVertexNormal[Shape = star,
  					LineWidth = 1pt]
  \tikzset{VertexStyle/.append style =
{star points=9,star point ratio=0.6, inner sep =0pt, minimum size=14pt}}
  \Vertex[x=0,y=-8,L=1]{B1121211}
  \Vertex[x=2,y=-8,L=2]{B1121212}  
  \SetVertexNoLabel
  \SetVertexNormal[Shape      = circle,
                   LineWidth  = 1pt]
  \Vertex{r}
  \Edge(A)(r)
  \Edge(A)(A1)
  \Edge(A)(A2)
  \Edge(A1)(A11)
  \Edge(A1)(A12)
  \Edge(A2)(A21)
  \Edge(A2)(A22)
  \Edge(B)(r)
  \Edge(B)(B1)
  \Edge(B)(B2)
  \Edge(B1)(B11)
  \Edge(B1)(B12)
  \Edge(B2)(B21)
  \Edge(B2)(B22)
  \Edge(B11)(B111)
  \Edge(B11)(B112)
  \Edge(B12)(B121)
  \Edge(B12)(B122)
  \Edge(B111)(B1111)
  \Edge(B111)(B1112)
  \Edge(B112)(B1121)
  \Edge(B112)(B1122)
  \Edge(B121)(B1211)
  \Edge(B121)(B1212)
  \Edge(B1111)(B11111)
  \Edge(B1111)(B11112)
  \Edge(B1112)(B11121)
  \Edge(B1112)(B11122)
  \Edge(B1121)(B11211)
  \Edge(B1121)(B11212)
  \Edge(B1211)(B12111)
  \Edge(B1211)(B12112)
  \Edge(B1212)(B12121)
  \Edge(B1212)(B12122)
  \Edge(B11121)(B111211)
  \Edge(B11121)(B111212)
  \Edge(B11211)(B112111)
  \Edge(B11211)(B112112)
  \Edge(B11212)(B112121)
  \Edge(B11212)(B112122)
  \Edge(B12121)(B121211)
  \Edge(B12121)(B121212)
  \Edge(B112121)(B1121211)
  \Edge(B112121)(B1121212)
\end{tikzpicture}}
\end{center}
\caption{The case tree. Each vertex represents the case whose label is given by appending the labels of vertices along the path from the root to it. The two special end-cases are in stars. The marked split-cases are in triangles if \textbf{P1} lost $1$ edge and in squares if he lost $2$.}
\label{fig:tree}
\end{figure}
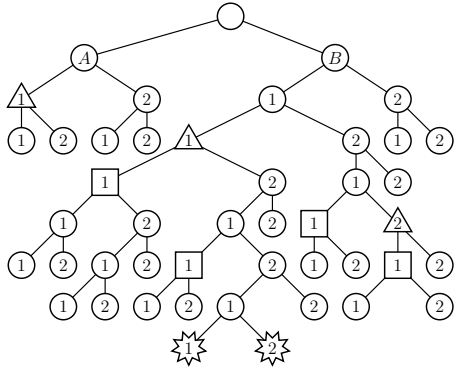

In this section we present a drawing strategy for \textbf{P2} in $\mathcal{R}(K_\omega\sqcup K_\omega,G)$. We construct it by considering various cases, according to the moves of \textbf{P1}. The following trick allows us to reduce significantly the number of cases considered: instead of considering all possible moves of \textbf{P1} up to isomorphism we often give him \emph{additional edges}, i.e. edges taken by \textbf{P1} which are not specified immediately after \textbf{P1}'s turn, but are obviously taken into account. All figures depict (part of) the configuration in $K^2$ only (the first edge of \textbf{P1} being in $K^1$), not featuring the additional edges. At a certain stage of the game, the current number of additional edges is marked as ``$+n$'' on the corresponding figure and the rest of \textbf{P1}'s edges are specified. The first edge of \textbf{P1} is considered specified, although it does not appear drawn. 
In all the figures, \textbf{P1}'s edges are drawn as dashed lines while \textbf{P2}'s edges are drawn as continuous lines. Whenever two figures are drawn for the same case, the left one corresponds to the initial state of the game before \textbf{P2}'s move under the assumption defining the present case, while the right one corresponds to the final state of the game before \textbf{P2}'s subsequent move. If only one figure is drawn, it corresponds to the final state of the game, as the initial state of the game is the same as the parent case with an additional assumption defining the present case.

The cases considered in the strategy naturally form a strict binary tree (see Figure~\ref{fig:tree}), whose leaves we call  \emph{end-cases} and whose internal nodes we call \emph{split-cases}. When possible, we will respect the convention that at each split-case we consider a particular edge \textbf{P2} is interested in, the left child (labeled by $1$) corresponds to the edge being already taken by \textbf{P1}, while the right child (labeled by $2$) corresponds to the edge being available for \textbf{P2} to take. 

Finally, if, at a certain stage of the game, \textbf{P1} has $k+1$ edges in total (including his first edge in $K^1$ and at most $k$ other edges in $K^2$), we say that \emph{\textbf{P1} lost $l$ edges}, if $e_\textbf{P1}(G)\le k-l$ for any $G$ in $K^2$. It is clear that the number of edges lost by \textbf{P2} is non-decreasing in the course of the game. For that reason we shall indicate the loss of edges as soon as it arises. More precisely, a split-case is \emph{marked} if at the final stage of the case \textbf{P1} is certain to have lost more edges than in the parent case. These cases are denoted by triangles (resp. squares) in Figure~\ref{fig:tree} if \textbf{P1} lost $1$ (resp. $2$) edges.

For the reader's convenience, in Section~\ref{subsec:case:1} we make a detailed analysis of \textbf{Case 1.} This analysis contains on one hand the strategy of \textbf{P2} in \textbf{Case 1.} and on the other hand the detailed verification of the hypotheses of Lemma~\ref{lem:2} in each of the end-cases. This allows us to conclude that \textbf{P2} has a drawing strategy in \textbf{Case 1.} In Section \ref{subsec:conditions} we present the automatic procedure to check that in all end-cases of \textbf{Case 2.}, with the exception of the two special end-cases, the hypotheses of Lemma~\ref{lem:2} are satisfied.  In Section~\ref{subsec:case:2} we give the strategy of \textbf{P2} in \textbf{Case 2.}, leaving the mechanical checks explained in Section~\ref{subsec:conditions} to the reader. Finally, the two \emph{special end-cases}, namely \textbf{\breakdot{Case 2.1.1.2.1.2.1.1.}} and \textbf{\breakdot{Case 2.1.1.2.1.2.1.2.}}, are treated separately in Section~\ref{s4.4}, as they do not allow a direct application of Lemma~\ref{lem:2}. Altogether, this allows us to conclude that \textbf{P2} has a drawing strategy.

\subsection{Split-case 1.}
\label{subsec:case:1}

\begin{figure}[H]
\begin{floatrow}[2]
\ffigbox
{\scalebox{.55}{
\begin{tikzpicture}
  \SetGraphUnit{3}
\GraphInit[vstyle=Normal]
  \SetVertexNormal[Shape      = circle,
                   LineWidth  = 1pt]
  \SetVertexMath
  \Vertex{A}
  \Vertex[x=1,y=1]{B}
  
  \SetVertexNormal[Shape = rectangle,
  					LineWidth = 1pt]
  \Vertex[x=0,y=2]{+1}
  \Edge(A)(B)
\end{tikzpicture}}}{
\caption{\textbf{General split-case.} Without loss of generality assume that the first edge \textbf{P1} takes is $XY$ in $K^1$. In response, \textbf{P2} takes the edge $AB$ in $K^2$. We mention that, excluding $XY$, \textbf{P1} has $1$ unspecified additional edge (not appearing in the figure) by ``+1''.}
\label{fig:gen}}

\ffigbox{
\centering
\scalebox{.55}{
\begin{tikzpicture}
  \SetGraphUnit{3}
\GraphInit[vstyle=Normal]
  \SetVertexNormal[Shape      = circle,
                   LineWidth  = 1pt]
  \SetVertexMath
  \Vertex{A}
  \Vertex[x=1,y=1]{B}
  
  \SetVertexNormal[Shape = rectangle,
  					LineWidth = 1pt]
  \Vertex[x=0,y=2]{+1^*}
  \Edge(A)(B)
\end{tikzpicture}
}
\scalebox{.55}{
\begin{tikzpicture}
  \SetGraphUnit{3}
\GraphInit[vstyle=Normal]
  \SetVertexNormal[Shape      = circle,
                   LineWidth  = 1pt]
  \SetVertexMath
  \Vertex{A}
  \Vertex[x=1,y=1]{B}
  \Vertex[x=2,y=0]{C}
  
  \SetVertexNormal[Shape = rectangle,
  					LineWidth = 1pt]
  \Vertex[x=0,y=2]{+2^*}
  \Edge(A)(B)
  \Edge(B)(C)
\end{tikzpicture}
}}{\caption{\textbf{Split-case 1.} Assume that the second edge taken by \textbf{P1} is either in $K^1$ or in $K^2$ and is incident with either $A$ or $B$. In the latter case, we may assume that it is incident with $B$. We indicate that either one of these is the case by a ``*'' in the figures. Then \textbf{P2} takes the edge $BC$, where $C$ is any free vertex in $K^2$.}
\label{fig:1}}
\end{floatrow}
\end{figure}

\paragraph{Marked split-case 1.1}
\begin{figure}[H]
\begin{floatrow}[3]
\ffigbox{
\centering
\scalebox{.55}{
\begin{tikzpicture}
  \SetGraphUnit{3}
\GraphInit[vstyle=Normal]
  \SetVertexNormal[Shape      = circle,
                   LineWidth  = 1pt]
  \SetVertexMath
  \Vertex{A}
  \Vertex[x=1,y=1]{B}
  \Vertex[x=2,y=0]{C}
  
  \SetVertexNormal[Shape = rectangle,
  					LineWidth = 1pt]
  \Vertex[x=0,y=2]{+1^*}
  \Edge(A)(B)
  \Edge(B)(C)
  \SetUpEdge[style={dashed}]
  \Edge(A)(C)
\end{tikzpicture}
}
\scalebox{.55}{
\begin{tikzpicture}
  \SetGraphUnit{3}
\GraphInit[vstyle=Normal]
  \SetVertexNormal[Shape      = circle,
                   LineWidth  = 1pt]
  \SetVertexMath
  \Vertex{A}
  \Vertex[x=1,y=1]{B}
  \Vertex[x=2,y=0]{C}
  \Vertex[x=1,y=2.5]{D}
  
  \SetVertexNormal[Shape = rectangle,
  					LineWidth = 1pt]
  \Vertex[x=0,y=2]{+3^*}
  \Edge(A)(B)
  \Edge(B)(C)
  \Edge(B)(D)
  \Edge(A)(D)
  \SetUpEdge[style={dashed}]
  \Edge(A)(C)
\end{tikzpicture}
}}{\caption{\textbf{Marked split-case \breakdot{1.1.}} Assume that \textbf{P1} has $AC$. Then \textbf{P2} takes $BD$, where $D$ is any free vertex in $K^2$. Note that at least one of the edges $DA$, $DC$ is not taken by any player, so assume without loss of generality that $DA$ is not taken. Then \textbf{P2} takes $DA$.\\\textbf{P1} lost an edge.}
\label{fig:1.1}}

\ffigbox{
\scalebox{.55}{
\begin{tikzpicture}
  \SetGraphUnit{3}
\GraphInit[vstyle=Normal]
  \SetVertexNormal[Shape      = circle,
                   LineWidth  = 1pt]
  \SetVertexMath
  \Vertex{A}
  \Vertex[x=1,y=1]{B}
  \Vertex[x=2,y=0]{C}
  \Vertex[x=1,y=2.5]{D}
  
  \SetVertexNormal[Shape = rectangle,
  					LineWidth = 1pt]
  \Vertex[x=0,y=2]{+2^*}
  \Edge(A)(B)
  \Edge(B)(C)
  \Edge(B)(D)
  \Edge(D)(A)
  \SetUpEdge[style={dashed}]
  \Edge(A)(C)
  \Edge(C)(D)
\end{tikzpicture}
}
\scalebox{.55}{
\begin{tikzpicture}
  \SetGraphUnit{3}
\GraphInit[vstyle=Normal]
  \SetVertexNormal[Shape      = circle,
                   LineWidth  = 1pt]
  \SetVertexMath
  \Vertex{A}
  \Vertex[x=1,y=1]{B}
  \Vertex[x=2,y=0]{C}
  \Vertex[x=1,y=2.5]{D}
  \Vertex[x=2,y=2]{E}
  
  \SetVertexNormal[Shape = rectangle,
  					LineWidth = 1pt]
  \Vertex[x=0,y=2]{+4^*}
  \Edge(A)(B)
  \Edge(B)(C)
  \Edge(D)(A)
  \Edge(B)(E)
  \Edge(D)(E)
  \SetUpEdge[lw=3pt]
  \Edge(B)(D)
  \SetUpEdge[style={dashed}]
  \Edge(A)(C)
  \Edge(C)(D)
\end{tikzpicture}
}}{\caption{\textbf{End-case \breakdot{1.1.1.}} Assume that \textbf{P1} has $DC$. Then \textbf{P2} takes $BE$, where $E$ is any free vertex in $K^2$. Note that at least one of the edges $ED$, $EA$ is not taken by any player, so assume without loss of generality that $ED$ is not taken. Then \textbf{P2} takes $ED$.\\The potential base is $BD$.}
\label{fig:1.1.1}}

\ffigbox{\scalebox{.55}{
\begin{tikzpicture}
  \SetGraphUnit{3}
\GraphInit[vstyle=Normal]
  \SetVertexNormal[Shape      = circle,
                   LineWidth  = 1pt]
  \SetVertexMath
  \Vertex{A}
  \Vertex[x=1,y=1]{B}
  \Vertex[x=2,y=0]{C}
  \Vertex[x=1,y=2.5]{D}
  
  \SetVertexNormal[Shape = rectangle,
  					LineWidth = 1pt]
  \Vertex[x=0,y=2]{+4^*}
  \Edge(A)(B)
  \Edge(B)(C)
  \Edge(D)(A)
  \Edge(C)(D)
  \SetUpEdge[lw=3pt]
  \Edge(B)(D)
  \SetUpEdge[style={dashed}]
  \Edge(A)(C)
\end{tikzpicture}
}}{\caption{\textbf{End-case \breakdot{1.1.2.}} Assume that \textbf{P1} does not have $DC$. Then \textbf{P2} takes this edge.\\The potential base is $BD$.}
\label{fig:1.1.2}
}
\end{floatrow}
\end{figure}

Let us begin by checking that in \textbf{Marked split-case 1.1.}, Figure~\ref{fig:1.1}, \textbf{P1} lost an edge. At this stage \textbf{P1} has a total of $5$ edges including the first edge $XY$, which is in $K^1$. We need to check that $e_{\textbf{P1}}(G)\le 3$ for any $G$ in $K^2$. Assume for the sake of contradiction that there exist a $G$ in $K^2$ such that $e_{\textbf{P1}}(G)\ge 4$. Then $G$ contains all edges of \textbf{P1} except $XY$ and contains no edges of \textbf{P2}. This forces the second edge of \textbf{P1} to be in $K^2$, and moreover, condition ``*'' forces this edge to be incident to $B$, say $BF$. In order for $G$ to contain both $AC$ and $BF$, the base $X_0X_1$ of $G$ needs to satisfy without loss of generality $X_0\in\{A,C\}$, as any edge in $G$ is incident with the base. Then $G$ contains the edge $X_0B$, as every vertex of $G$ is connected to $X_0$, so $X_0B$ needs not to be taken by \textbf{P2}. This yields the desired contradiction (see Figure~\ref{fig:1.1}).

In order to apply Lemma~\ref{lem:2} to \textbf{End-case 1.1.1.} and \textbf{End-case 1.1.2.} and conclude that in these cases \textbf{P2} has a drawing strategy, we are now ready to check that the hypotheses of Lemma~\ref{lem:2} are satisfied in these cases.

Firstly, let us check that condition \ref{cond:b} of Lemma~\ref{lem:2} holds. Since \textbf{P1} lost an edge in \textbf{Marked split-case 1.1.}, the same holds for \textbf{End-case 1.1.1.} and \textbf{End-case 1.1.2.}. Note that the total number of edges of \textbf{P1} including the first edge $XY$ which is in $K^1$ is $7$ and $6$, respectively, for the two end-cases. Therefore, in \textbf{End-case 1.1.1.} and \textbf{End-case 1.1.2.} each $G$ in $K^2$ satisfies $e_{\textbf{P1}}(G)\le 5$ and  $e_{\textbf{P1}}(G)\le 4$, respectively, as desired.

Secondly, let us check that condition \ref{cond:c} of Lemma~\ref{lem:2} holds. This follows immediately from the fact that in \textbf{End-case 1.1.1.} and \textbf{End-case 1.1.2.} \textbf{P1} has $4$ additional edges, so at most $5$ edges in $K^1$.

Thirdly, let us check that condition \ref{cond:a} of Lemma~\ref{lem:2} holds. We do so by applying Lemma~\ref{lem:3} to the thickened edge $BD$. Thus, we are left with checking that the hypotheses of Lemma~\ref{lem:3} are satisfied.

Condition \ref{cond:base} of Lemma~\ref{lem:3} is checked directly on the figures. Note that by \eqref{rem35} we have that $AC$ is good for $BD$, as both $AB$ and $AD$ are taken by \textbf{P2}. Recall that in \textbf{End-case 1.1.1.} and in \textbf{End-case 1.1.2.}  \textbf{P1} has at most 6 and 5 edges in $K^2$, respectively. Therefore, in \textbf{End-case 1.1.1.} and in \textbf{End-case 1.1.2.}  \textbf{P1} has at most 5 and 4 bad edges, respectively. It follows that condition \ref{cond:5bad} of Lemma~\ref{lem:3} holds. Furthermore, in \textbf{End-case 1.1.2.} condition \ref{cond:2d} of Lemma~\ref{lem:3} also holds. Finally, we are left with checking condition \ref{cond:2d} in \textbf{End-case 1.1.1.} i.e.\ that $CD$, together with the other 4 possible bad edges, cannot form a $2\Delta$-configuration. Indeed, $CD$ cannot appear in a $2\Delta$-configuration for $BD$, since $BC$ is taken by \textbf{P2}. This concludes the proof.

\begin{figure}[H]
\begin{floatrow}[3]

\ffigbox{\scalebox{.55}{
\begin{tikzpicture}
  \SetGraphUnit{3}
\GraphInit[vstyle=Normal]
  \SetVertexNormal[Shape      = circle,
                   LineWidth  = 1pt]
  \SetVertexMath
  \Vertex{A}
  \Vertex[x=1,y=1]{B}
  \Vertex[x=2,y=0]{C}
  
  \SetVertexNormal[Shape = rectangle,
  					LineWidth = 1pt]
  \Vertex[x=0,y=2]{+3^*}
  \Edge(A)(B)
  \Edge(B)(C)
  \Edge(A)(C)
\end{tikzpicture}
}
\scalebox{.55}{
\begin{tikzpicture}
  \SetGraphUnit{3}
\GraphInit[vstyle=Normal]
  \SetVertexNormal[Shape      = circle,
                   LineWidth  = 1pt]
  \SetVertexMath
  \Vertex{A}
  \Vertex[x=1,y=1]{B}
  \Vertex[x=2,y=0]{C}
  \Vertex[x=1,y=2.5]{D}
  
  \SetVertexNormal[Shape = rectangle,
  					LineWidth = 1pt]
  \Vertex[x=0,y=2]{+4^*}
  \Edge(A)(B)
  \Edge(B)(C)
  \Edge(C)(D)
  \Edge(A)(C)
\end{tikzpicture}
}}{\caption{\textbf{Split-case \breakdot{1.2.}} Assume that \textbf{P1} does not have $AC$. Then \textbf{P2} takes this edge and then plays the edge $CD$, where $D$ is a free vertex.}
\label{fig:1.2}}

\ffigbox{
\scalebox{.55}{
\begin{tikzpicture}
  \SetGraphUnit{3}
\GraphInit[vstyle=Normal]
  \SetVertexNormal[Shape      = circle,
                   LineWidth  = 1pt]
  \SetVertexMath
  \Vertex{A}
  \Vertex[x=1,y=1]{B}
  \Vertex[x=2,y=0]{C}
  \Vertex[x=1,y=2.5]{D}
  
  \SetVertexNormal[Shape = rectangle,
  					LineWidth = 1pt]
  \Vertex[x=0,y=2]{+3^*}
  \Edge(A)(B)
  \Edge(B)(C)
  \Edge(A)(C)
  \Edge(C)(D)
  \SetUpEdge[style={dashed}]
  \Edge(A)(D)
\end{tikzpicture}
}
\scalebox{.55}{
\begin{tikzpicture}
  \SetGraphUnit{3}
\GraphInit[vstyle=Normal]
  \SetVertexNormal[Shape      = circle,
                   LineWidth  = 1pt]
  \SetVertexMath
  \Vertex{A}
  \Vertex[x=1,y=1]{B}
  \Vertex[x=2,y=0]{C}
  \Vertex[x=1,y=2.5]{D}
  
  \SetVertexNormal[Shape = rectangle,
  					LineWidth = 1pt]
  \Vertex[x=0,y=2]{+4^*}
  \Edge(A)(B)
  \Edge(A)(C)
  \Edge(C)(D)
  \Edge(B)(D)
  \SetUpEdge[lw=3pt]
  \Edge(B)(C)
  \SetUpEdge[style={dashed}]
  \Edge(A)(D)
\end{tikzpicture}
}}{\caption{\textbf{End-case \breakdot{1.2.1.}} Assume that \textbf{P1} has $AD$. Then he does not have $BD$, as $D$ was a \textbf{P1}-free vertex before his move. Then \textbf{P2} takes $BD$.\\The potential base is $BC$.}
\label{fig:1.2.1}}

\ffigbox{\scalebox{.55}{
\begin{tikzpicture}
  \SetGraphUnit{3}
\GraphInit[vstyle=Normal]
  \SetVertexNormal[Shape      = circle,
                   LineWidth  = 1pt]
  \SetVertexMath
  \Vertex{A}
  \Vertex[x=1,y=1]{B}
  \Vertex[x=2,y=0]{C}
  \Vertex[x=1,y=2.5]{D}
  
  \SetVertexNormal[Shape = rectangle,
  					LineWidth = 1pt]
  \Vertex[x=0,y=2]{+5^*}
  \Edge(A)(B)
  \Edge(B)(C)
  \Edge(C)(D)
  \Edge(A)(D)
  \SetUpEdge[lw=3pt]
  \Edge(A)(C)
\end{tikzpicture}
}}{\caption{\textbf{End-case \breakdot{1.2.2.}} Assume that \textbf{P2} does not have $AD$. Then \textbf{P2} takes this edge.\\The potential base is $AC$.}
\label{fig:1.2.2}}
\end{floatrow}
\end{figure}

\paragraph{Split-case 1.2}
Note that \textbf{Split-case 1.2.} is not marked. In order to apply Lemma~\ref{lem:2} to \textbf{End-case 1.2.1.} and \textbf{End-case 1.2.2.} and conclude that in these cases \textbf{P2} has a drawing strategy, we shall check that the hypotheses of Lemma~\ref{lem:2} are satisfied.

In both \textbf{End-case 1.2.1.} and \textbf{End-case 1.2.2.} condition \ref{cond:b} of Lemma~\ref{lem:2} trivially holds as \textbf{P1} has at most $5$ edges in $K^2$. Condition \ref{cond:c} of Lemma~\ref{lem:2} is also automatic, as \textbf{P1} has $4$ and $5$ additional edges, respectively. 

In order to check that  condition \ref{cond:a} of Lemma~\ref{lem:2} holds, we shall check that the hypotheses of Lemma~\ref{lem:3} are satisfied. Condition \ref{cond:base} of Lemma~\ref{lem:3} is checked directly on Figures~\ref{fig:1.2.1} and \ref{fig:1.2.2}. Note that in \textbf{End-case 1.2.1.} the edge $AD$ is good for $BC$, as $AB$ and $AC$ are both taken by \textbf{P2}. It follows that there are at most $4$ bad edges, implying conditions \ref{cond:5bad} and \ref{cond:2d} of Lemma~\ref{lem:3}. Also note that in \textbf{End-case 1.2.2.} the second edge of \textbf{P1} is either in $K^1$ or is in $K^2$ incident to $B$ by condition ``*''. In either case this edge is good for $AC$, as both $AB$ and $BC$ are taken by \textbf{P2}. This allows us to conclude as in the previous case.

\subsection{Applying Lemma~\ref{lem:2}}
\label{subsec:conditions}
Let us now explain how the conditions of Lemma~\ref{lem:2} are verified in the remaining end-cases with the exception of the special ones which we shall discuss further in Section~\ref{s4.4}. The procedure generalises the one in the example cases discussed in detail in Section~\ref{subsec:case:1}.

\paragraph{Condition~\ref{cond:c} of Lemma~\ref{lem:2}}
In each of the end-cases (including the special ones), we mechanically check that \textbf{P1} has at most $5$ additional edges, so he has at most $6$ edges in $K^1$.

\paragraph{Condition~\ref{cond:b} of Lemma~\ref{lem:2}}
It is just a little harder to check that in all non-special end-cases for all $G$ in $K^2$, $e_{\textbf{P1}}(G) \leq 5$. For each  marked split-case (see Figure~\ref{fig:tree}) we see that \textbf{P1} loses (at least) $l$ edges for a certain number $l \in \{1, 2 \}$ which depends on the case. If \textbf{P1} loses $l$ edges in a given split-case, it follows that he also loses $l$ edges in all of its descendant cases. For each non-special end-case we refer to the closest ancestor marked split-case (if any) to see how many edges \textbf{P1} lost (see Figure~\ref{fig:tree}) and check that condition~\ref{cond:b} holds.

In order to establish that \textbf{P1} loses $l$ edges in a marked split-case, we proceed as follows. Let $k+1$ be the total number of edges taken by \textbf{P1}. We consider all non-isomorphic edges $X_0 X_1$ in $K^2$, such that $X_0X_1$ is not taken by \textbf{P2} (so $X_0X_1$ could potentially become the base of a $G$ that \textbf{P1} constructs). We count the number of edges in $K^2$ taken by \textbf{P1} which are of the form $X_iY$ with $YX_{1-i}$ not taken by \textbf{P2}, and add the edge $X_0X_1$ if it is taken by \textbf{P1}. This number bounds $e_{\textbf{P1}}(G)$ for copies of $G$ with base $X_0X_1$. We check that for each choice of $X_0X_1$ there are at most $k-l$ such edges. To exclude a large number of edges $X_0X_1$ from the very beginning, we first investigate $\deg_{\textbf{P1}}(v)$ for all vertices $v$. 

\paragraph{Condition~\ref{cond:a} of Lemma~\ref{lem:2}}
In each of the non-special end-cases, we mechanically check that \textbf{P2} has a potential base $A_0A_1$ -- which is declared and marked in all pictures by a thickened edge. To do so it is enough to check the conditions of Lemma~\ref{lem:3}. Firstly, we inspect that \textbf{P1} has at most $5$ bad edges for $A_0A_1$ (condition \ref{cond:5bad} of Lemma~\ref{lem:3}), using \eqref{rem35} to establish that some edges are good for $A_0A_1$ and further that the bad edges do not form a $2\Delta$-configuration if there are $5$ of them (condition \ref{cond:2d} of Lemma~\ref{lem:3}). Finally, we inspect that there exist two vertices $B_1, B_2$ such that \textbf{P2} has the edges $A_0B_1, A_0B_2, A_1B_1, A_1B_2$ (condition \ref{cond:base} of Lemma~\ref{lem:3}), so we can conclude.

\begin{rem}While the verification of the conditions of Lemma~\ref{lem:2} is very easy given the strategy for \textbf{P2}, it is a central part of the paper to determine that right strategy. The idea of using additional edges and reducing drastically the number of cases helped us achieve this goal.
\end{rem}

\subsection{Split-case 2.}
\label{subsec:case:2}
We now describe the rest of the strategy. Recall the \textbf{General split-case}, Figure~\ref{fig:gen}.

\begin{figure}[H]
\begin{floatrow}[3]
\ffigbox{\scalebox{.55}{
\begin{tikzpicture}
  \SetGraphUnit{3}
\GraphInit[vstyle=Normal]
  \SetVertexNormal[Shape      = circle,
                   LineWidth  = 1pt]
  \SetVertexMath
  \Vertex{A}
  \Vertex[x=1,y=1]{B}
  \Vertex[x=0,y=-2]{C}
  \Vertex[x=2,y=-2]{D}
  
  \SetVertexNormal[Shape = rectangle,
  					LineWidth = 1pt]
  \Edge(A)(B)
  \SetUpEdge[style={dashed}]
  \Edge(C)(D)
\end{tikzpicture}
}
\scalebox{.55}{
\begin{tikzpicture}
  \SetGraphUnit{3}
\GraphInit[vstyle=Normal]
  \SetVertexNormal[Shape      = circle,
                   LineWidth  = 1pt]
  \SetVertexMath
  \Vertex{A}
  \Vertex[x=1,y=1]{B}
  \Vertex[x=0,y=-2]{C}
  \Vertex[x=2,y=-2]{D}
  \Vertex[x=2,y=0]{E}
  
  \SetVertexNormal[Shape = rectangle,
  					LineWidth = 1pt]
  \Vertex[x=0,y=2]{+1}
  \Edge(A)(B)
  \Edge(B)(E)
  \SetUpEdge[style={dashed}]
  \Edge(C)(D)
\end{tikzpicture}
}}{\caption{\textbf{Split-case 2.} Assume that the second edge taken by \textbf{P1} is $CD$ with $C$ and $D$ free vertices in $K^2$. Then \textbf{P2} takes $BE$, where $E$ is a free vertex in $K^2$.}
\label{fig:2}}

\ffigbox{
\scalebox{.55}{
\begin{tikzpicture}
  \SetGraphUnit{3}
\GraphInit[vstyle=Normal]
  \SetVertexNormal[Shape      = circle,
                   LineWidth  = 1pt]
  \SetVertexMath
  \Vertex{A}
  \Vertex[x=1,y=1]{B}
  \Vertex[x=0,y=-2]{C}
  \Vertex[x=2,y=-2]{D}
  \Vertex[x=2,y=0]{E}
  
  \SetVertexNormal[Shape = rectangle,
  					LineWidth = 1pt]
  \Edge(A)(B)
  \Edge(B)(E)
  \SetUpEdge[style={dashed}]
  \Edge(C)(D)
  \Edge(A)(E)
\end{tikzpicture}
}
\scalebox{.55}{
\begin{tikzpicture}
  \SetGraphUnit{3}
\GraphInit[vstyle=Normal]
  \SetVertexNormal[Shape      = circle,
                   LineWidth  = 1pt]
  \SetVertexMath
  \Vertex{A}
  \Vertex[x=1,y=1]{B}
  \Vertex[x=0,y=-2]{C}
  \Vertex[x=2,y=-2]{D}
  \Vertex[x=2,y=0]{E}
  
  \SetVertexNormal[Shape = rectangle,
  					LineWidth = 1pt]
  \Vertex[x=0,y=2]{+1}
  \Edge(A)(B)
  \Edge(B)(E)
  \Edge(C)(E)
  \SetUpEdge[style={dashed}]
  \Edge(C)(D)
  \Edge(A)(E)
\end{tikzpicture}
}}{\caption{\textbf{Split-case \breakdot{2.1.}} Assume that \textbf{P1} has $AE$. Then \textbf{P2} takes $CE$.
}
\label{fig:2.1}}

\ffigbox{\scalebox{.55}{
\begin{tikzpicture}
  \SetGraphUnit{3}
\GraphInit[vstyle=Normal]
  \SetVertexNormal[Shape      = circle,
                   LineWidth  = 1pt]
  \SetVertexMath
  \Vertex{A}
  \Vertex[x=1,y=1]{B}
  \Vertex[x=0,y=-2]{C}
  \Vertex[x=2,y=-2]{D}
  \Vertex[x=2,y=0]{E}
  
  \SetVertexNormal[Shape = rectangle,
  					LineWidth = 1pt]
  \Edge(A)(B)
  \Edge(B)(E)
  \Edge(E)(C)
  \SetUpEdge[style={dashed}]
  \Edge(C)(D)
  \Edge(A)(E)
  \Edge(B)(C)
\end{tikzpicture}
}
\scalebox{.55}{
\begin{tikzpicture}
  \SetGraphUnit{3}
\GraphInit[vstyle=Normal]
  \SetVertexNormal[Shape      = circle,
                   LineWidth  = 1pt]
  \SetVertexMath
  \Vertex{A}
  \Vertex[x=1,y=1]{B}
  \Vertex[x=0,y=-2]{C}
  \Vertex[x=2,y=-2]{D}
  \Vertex[x=2,y=0]{E}
  \Vertex[x=1,y=2.5]{F}
  
  \SetVertexNormal[Shape = rectangle,
  					LineWidth = 1pt]
  \Vertex[x=0,y=2]{+1}
  \Edge(A)(B)
  \Edge(B)(E)
  \Edge(E)(C)
  \Edge(B)(F)
  \SetUpEdge[style={dashed}]
  \Edge(C)(D)
  \Edge(A)(E)
  \Edge(B)(C)
\end{tikzpicture}
}}{\caption{\textbf{Marked split-case \breakdot{2.1.1.}} Assume that \textbf{P1} has $BC$. Then \textbf{P2} takes $BF$, where $F$ is a free vertex in $K^2$.\\\textbf{P1} lost an edge.}
\label{fig:2.1.1}}
\end{floatrow}
\end{figure}

\begin{figure}[H]
\begin{floatrow}[3]
\ffigbox{{
\scalebox{.55}{
\begin{tikzpicture}
  \SetGraphUnit{3}
\GraphInit[vstyle=Normal]
  \SetVertexNormal[Shape      = circle,
                   LineWidth  = 1pt]
  \SetVertexMath
  \Vertex{A}
  \Vertex[x=1,y=1]{B}
  \Vertex[x=0,y=-2]{C}
  \Vertex[x=2,y=-2]{D}
  \Vertex[x=2,y=0]{E}
  \Vertex[x=1,y=2.5]{F}
  
  \SetVertexNormal[Shape = rectangle,
  					LineWidth = 1pt]
  \Edge(A)(B)
  \Edge(B)(E)
  \Edge(E)(C)
  \Edge(B)(F)
  \SetUpEdge[style={dashed}]
  \Edge(C)(D)
  \Edge(A)(E)
  \Edge(B)(C)
  \Edge(E)(F)
\end{tikzpicture}
}
\scalebox{.55}{
\begin{tikzpicture}
  \SetGraphUnit{3}
\GraphInit[vstyle=Normal]
  \SetVertexNormal[Shape      = circle,
                   LineWidth  = 1pt]
  \SetVertexMath
  \Vertex{A}
  \Vertex[x=1,y=1]{B}
  \Vertex[x=0,y=-2]{C}
  \Vertex[x=2,y=-2]{D}
  \Vertex[x=2,y=0]{E}
  \Vertex[x=1,y=2.5]{F}
  
  \SetVertexNormal[Shape = rectangle,
  					LineWidth = 1pt]
  \Vertex[x=0,y=2]{+1}
  \Edge(A)(B)
  \Edge(B)(E)
  \Edge(E)(C)
  \Edge(B)(F)
  \Edge(A)(F)
  \SetUpEdge[style={dashed}]
  \Edge(C)(D)
  \Edge(A)(E)
  \Edge(B)(C)
  \Edge(E)(F)
\end{tikzpicture}
}}}{\caption{\textbf{Marked split-case \breakdot{2.1.1.1.}} Assume that \textbf{P1} has $EF$. Then \textbf{P2} takes $AF$.\\\textbf{P1} lost two edges.}
\label{fig:2.1.1.1}}

\ffigbox{
\scalebox{.55}{
\begin{tikzpicture}
  \SetGraphUnit{3}
\GraphInit[vstyle=Normal]
  \SetVertexNormal[Shape      = circle,
                   LineWidth  = 1pt]
  \SetVertexMath
  \Vertex{A}
  \Vertex[x=1,y=1]{B}
  \Vertex[x=0,y=-2]{C}
  \Vertex[x=2,y=-2]{D}
  \Vertex[x=2,y=0]{E}
  \Vertex[x=1,y=2.5]{F}
  
  \SetVertexNormal[Shape = rectangle,
  					LineWidth = 1pt]
  \Edge(A)(B)
  \Edge(B)(E)
  \Edge(E)(C)
  \Edge(B)(F)
  \Edge(A)(F)
  \SetUpEdge[style={dashed}]
  \Edge(C)(D)
  \Edge(A)(E)
  \Edge(B)(C)
  \Edge(E)(F)
  \Edge(B)(D)
\end{tikzpicture}
}
\scalebox{.55}{
\begin{tikzpicture}
  \SetGraphUnit{3}
\GraphInit[vstyle=Normal]
  \SetVertexNormal[Shape      = circle,
                   LineWidth  = 1pt]
  \SetVertexMath
  \Vertex{A}
  \Vertex[x=1,y=1]{B}
  \Vertex[x=0,y=-2]{C}
  \Vertex[x=2,y=-2]{D}
  \Vertex[x=2,y=0]{E}
  \Vertex[x=1,y=2.5]{F}
  \Vertex[x=2,y=1.5]{I}
  
  \SetVertexNormal[Shape = rectangle,
  					LineWidth = 1pt]
  \Vertex[x=0,y=2]{+1}
  \Edge(A)(B)
  \Edge(B)(E)
  \Edge(E)(C)
  \Edge(B)(F)
  \Edge(A)(F)
  \Edge(F)(I)
  \SetUpEdge[style={dashed}]
  \Edge(C)(D)
  \Edge(A)(E)
  \Edge(B)(C)
  \Edge(B)(D)
  \Edge(E)(F)
\end{tikzpicture}
}}{\caption{\textbf{Split-case \breakdot{2.1.1.1.1.}} Assume that \textbf{P1} has $BD$. Then \textbf{P2} takes $FI$, where $I$ is a free vertex in $K^2$.}
\label{fig:2.1.1.1.1}}

\ffigbox{
\scalebox{.55}{
\begin{tikzpicture}
  \SetGraphUnit{3}
\GraphInit[vstyle=Normal]
  \SetVertexNormal[Shape      = circle,
                   LineWidth  = 1pt]
  \SetVertexMath
  \Vertex{A}
  \Vertex[x=1,y=1]{B}
  \Vertex[x=0,y=-2]{C}
  \Vertex[x=2,y=-2]{D}
  \Vertex[x=2,y=0]{E}
  \Vertex[x=1,y=2.5]{F}
  \Vertex[x=2,y=1.5]{I}
  
  \SetVertexNormal[Shape = rectangle,
  					LineWidth = 1pt]
  \Edge(A)(B)
  \Edge(B)(E)
  \Edge(E)(C)
  \Edge(B)(F)
  \Edge(A)(F)
  \Edge(F)(I)
  \SetUpEdge[style={dashed}]
  \Edge(C)(D)
  \Edge(A)(E)
  \Edge(B)(C)
  \Edge(E)(F)
  \Edge(B)(D)
  \SetUpEdge[style={dashed,bend right=30}]
  \Edge(A)(I)
\end{tikzpicture}
}
\scalebox{.55}{
\begin{tikzpicture}
  \SetGraphUnit{3}
\GraphInit[vstyle=Normal]
  \SetVertexNormal[Shape      = circle,
                   LineWidth  = 1pt]
  \SetVertexMath
  \Vertex{A}
  \Vertex[x=1,y=1]{B}
  \Vertex[x=0,y=-2]{C}
  \Vertex[x=2,y=-2]{D}
  \Vertex[x=2,y=0]{E}
  \Vertex[x=1,y=2.5]{F}
  \Vertex[x=2,y=1.5]{I}
  
  \SetVertexNormal[Shape = rectangle,
  					LineWidth = 1pt]
  \Vertex[x=0,y=2]{+1}
  \Edge(A)(B)
  \Edge(B)(E)
  \Edge(E)(C)
  \Edge(A)(F)
  \Edge(F)(I)
  \Edge(B)(I)
  \SetUpEdge[lw=3pt]
  \Edge(B)(F)
  \SetUpEdge[style={dashed}]
  \Edge(C)(D)
  \Edge(A)(E)
  \Edge(B)(C)
  \Edge(E)(F)
  \Edge(B)(D)
  \SetUpEdge[style={dashed,bend right=30}]
  \Edge(A)(I)
\end{tikzpicture}
}}{\caption{\textbf{End-case \breakdot{2.1.1.1.1.1.}} Assume that \textbf{P1} has $AI$. Then \textbf{P2} takes $BI$.\\
The potential base is $BF$.}
\label{fig:2.1.1.1.1.1}
}
\end{floatrow}
\end{figure}

\begin{figure}[H]
\begin{floatrow}[3]

\ffigbox{\scalebox{.55}{
\begin{tikzpicture}
  \SetGraphUnit{3}
\GraphInit[vstyle=Normal]
  \SetVertexNormal[Shape      = circle,
                   LineWidth  = 1pt]
  \SetVertexMath
  \Vertex{A}
  \Vertex[x=1,y=1]{B}
  \Vertex[x=0,y=-2]{C}
  \Vertex[x=2,y=-2]{D}
  \Vertex[x=2,y=0]{E}
  \Vertex[x=1,y=2.5]{F}
  \Vertex[x=2,y=1.5]{I}
  
  \SetVertexNormal[Shape = rectangle,
  					LineWidth = 1pt]
  \Vertex[x=0,y=2]{+2}
  \Edge(A)(B)
  \Edge(B)(E)
  \Edge(E)(C)
  \Edge(B)(F)
  \Edge(F)(I)
  \SetUpEdge[style={bend right=30}]
  \Edge(A)(I)
  \SetUpEdge[lw=3pt]
  \Edge(A)(F)
  \SetUpEdge[style={dashed}]
  \Edge(C)(D)
  \Edge(A)(E)
  \Edge(B)(C)
  \Edge(E)(F)
  \Edge(B)(D)
\end{tikzpicture}
}}{\caption{\textbf{End-case \breakdot{2.1.1.1.1.2.}} Assume that \textbf{P1} does not have $AI$. Then \textbf{P2} takes this edge.\\The potential base is $AF$.}
\label{fig:2.1.1.1.1.2}}

\ffigbox{{
\scalebox{.55}{
\begin{tikzpicture}
  \SetGraphUnit{3}
\GraphInit[vstyle=Normal]
  \SetVertexNormal[Shape      = circle,
                   LineWidth  = 1pt]
  \SetVertexMath
  \Vertex{A}
  \Vertex[x=1,y=1]{B}
  \Vertex[x=0,y=-2]{C}
  \Vertex[x=2,y=-2]{D}
  \Vertex[x=2,y=0]{E}
  \Vertex[x=1,y=2.5]{F}
  
  \SetVertexNormal[Shape = rectangle,
  					LineWidth = 1pt]
  \Vertex[x=0,y=2]{+1}
  \Edge(A)(B)
  \Edge(B)(E)
  \Edge(E)(C)
  \Edge(B)(F)
  \Edge(A)(F)
  \SetUpEdge[style={dashed}]
  \Edge(C)(D)
  \Edge(A)(E)
  \Edge(B)(C)
  \Edge(E)(F)
\end{tikzpicture}
}}}{\caption{\textbf{Split-case \breakdot{2.1.1.1.2.}} Assume that \textbf{P1} does not have $BD$.}
\label{fig:2.1.1.1.2}}

\ffigbox{
\scalebox{.55}{
\begin{tikzpicture}
  \SetGraphUnit{3}
\GraphInit[vstyle=Normal]
  \SetVertexNormal[Shape      = circle,
                   LineWidth  = 1pt]
  \SetVertexMath
  \Vertex{A}
  \Vertex[x=1,y=1]{B}
  \Vertex[x=0,y=-2]{C}
  \Vertex[x=2,y=-2]{D}
  \Vertex[x=2,y=0]{E}
  \Vertex[x=1,y=2.5]{F}
  
  \SetVertexNormal[Shape = rectangle,
  					LineWidth = 1pt]
  \Edge(A)(B)
  \Edge(B)(E)
  \Edge(E)(C)
  \Edge(B)(F)
  \Edge(A)(F)
  \SetUpEdge[style={dashed}]
  \Edge(C)(D)
  \Edge(A)(E)
  \Edge(B)(C)
  \Edge(E)(F)
  \Edge(A)(D)
\end{tikzpicture}
}
\scalebox{.55}{
\begin{tikzpicture}
  \SetGraphUnit{3}
\GraphInit[vstyle=Normal]
  \SetVertexNormal[Shape      = circle,
                   LineWidth  = 1pt]
  \SetVertexMath
  \Vertex{A}
  \Vertex[x=1,y=1]{B}
  \Vertex[x=0,y=-2]{C}
  \Vertex[x=2,y=-2]{D}
  \Vertex[x=2,y=0]{E}
  \Vertex[x=1,y=2.5]{F}
  \Vertex[x=0,y=1.5]{I}
  
  \SetVertexNormal[Shape = rectangle,
  					LineWidth = 1pt]
  \Vertex[x=2,y=2]{+1}
  \Edge(A)(B)
  \Edge(B)(E)
  \Edge(E)(C)
  \Edge(B)(F)
  \Edge(F)(I)
  \Edge(A)(F)
  \SetUpEdge[style={dashed}]
  \Edge(C)(D)
  \Edge(A)(E)
  \Edge(B)(C)
  \Edge(E)(F)
  \Edge(A)(D)
\end{tikzpicture}
}}{\caption{\textbf{Split-case \breakdot{2.1.1.1.2.1.}} Assume that \textbf{P1} has $AD$ or $DF$ and without loss of generality let this edge be $AD$. Then \textbf{P2} takes $FI$, where $I$ is a free vertex in $K^2$.}
\label{fig:2.1.1.1.2.1}}
\end{floatrow}
\end{figure}

\begin{figure}[H]
\begin{floatrow}[3]
\ffigbox{\scalebox{.55}{
\begin{tikzpicture}
  \SetGraphUnit{3}
\GraphInit[vstyle=Normal]
  \SetVertexNormal[Shape      = circle,
                   LineWidth  = 1pt]
  \SetVertexMath
  \Vertex{A}
  \Vertex[x=1,y=1]{B}
  \Vertex[x=0,y=-2]{C}
  \Vertex[x=2,y=-2]{D}
  \Vertex[x=2,y=0]{E}
  \Vertex[x=1,y=2.5]{F}
  \Vertex[x=0,y=1.5]{I}
  
  \SetVertexNormal[Shape = rectangle,
  					LineWidth = 1pt]
  \Edge(A)(B)
  \Edge(B)(E)
  \Edge(E)(C)
  \Edge(B)(F)
  \Edge(A)(F)
  \Edge(F)(I)
  \SetUpEdge[style={dashed}]
  \Edge(C)(D)
  \Edge(A)(E)
  \Edge(B)(C)
  \Edge(E)(F)
  \Edge(A)(D)
  \Edge(B)(I)
\end{tikzpicture}
}
\scalebox{.55}{
\begin{tikzpicture}
  \SetGraphUnit{3}
\GraphInit[vstyle=Normal]
  \SetVertexNormal[Shape      = circle,
                   LineWidth  = 1pt]
  \SetVertexMath
  \Vertex{A}
  \Vertex[x=1,y=1]{B}
  \Vertex[x=0,y=-2]{C}
  \Vertex[x=2,y=-2]{D}
  \Vertex[x=2,y=0]{E}
  \Vertex[x=1,y=2.5]{F}
  \Vertex[x=0,y=1.5]{I}
  
  \SetVertexNormal[Shape = rectangle,
  					LineWidth = 1pt]
  \Vertex[x=2,y=2]{+1}
  \Edge(A)(B)
  \Edge(B)(E)
  \Edge(E)(C)
  \Edge(B)(F)
  \Edge(F)(I)
  \Edge(A)(I)
  \SetUpEdge[lw=3pt]
  \Edge(A)(F)
  \SetUpEdge[style={dashed}]
  \Edge(C)(D)
  \Edge(A)(E)
  \Edge(B)(C)
  \Edge(E)(F)
  \Edge(A)(D)
  \Edge(B)(I)
\end{tikzpicture}
}}{\caption{\textbf{End-case \breakdot{2.1.1.1.2.1.1.}} Assume that \textbf{P1} has $BI$. Then \textbf{P2} takes $AI$.\\The potential base is $AF$.}
\label{fig:2.1.1.1.2.1.1}}

\ffigbox{\scalebox{.55}{
\begin{tikzpicture}
  \SetGraphUnit{3}
\GraphInit[vstyle=Normal]
  \SetVertexNormal[Shape      = circle,
                   LineWidth  = 1pt]
  \SetVertexMath
  \Vertex{A}
  \Vertex[x=1,y=1]{B}
  \Vertex[x=0,y=-2]{C}
  \Vertex[x=2,y=-2]{D}
  \Vertex[x=2,y=0]{E}
  \Vertex[x=1,y=2.5]{F}
  \Vertex[x=0,y=1.5]{I}
  
  \SetVertexNormal[Shape = rectangle,
  					LineWidth = 1pt]
  \Vertex[x=2,y=2]{+2}
  \Edge(A)(B)
  \Edge(B)(E)
  \Edge(E)(C)
  \Edge(A)(F)
  \Edge(F)(I)
  \Edge(B)(I)
  \SetUpEdge[lw=3pt]
  \Edge(B)(F)
  \SetUpEdge[style={dashed}]
  \Edge(C)(D)
  \Edge(A)(E)
  \Edge(B)(C)
  \Edge(E)(F)
  \Edge(A)(D)
\end{tikzpicture}
}}{\caption{\textbf{End-case \breakdot{2.1.1.1.2.1.2.}} Assume that \textbf{P1} does not have $BI$. Then \textbf{P2} takes this edge.\\The potential base is $BF$.}
\label{fig:2.1.1.1.2.1.2}}

\ffigbox{\scalebox{.55}{
\begin{tikzpicture}
  \SetGraphUnit{3}
\GraphInit[vstyle=Normal]
  \SetVertexNormal[Shape      = circle,
                   LineWidth  = 1pt]
  \SetVertexMath
  \Vertex{A}
  \Vertex[x=1,y=1]{B}
  \Vertex[x=0,y=-2]{C}
  \Vertex[x=2,y=-2]{D}
  \Vertex[x=2,y=0]{E}
  \Vertex[x=1,y=2.5]{F}
  
  \SetVertexNormal[Shape = rectangle,
  					LineWidth = 1pt]
  \Vertex[x=0,y=2]{+3}
  \Edge(B)(F)
  \Edge(B)(E)
  \Edge(E)(C)
  \Edge(A)(F)
  \Edge(B)(D)
  \Edge(A)(D)
  \SetUpEdge[lw=3pt]
  \Edge(A)(B)
  \SetUpEdge[style={dashed}]
  \Edge(C)(D)
  \Edge(A)(E)
  \Edge(B)(C)
  \Edge(E)(F)
\end{tikzpicture}
}}{\caption{\textbf{End-case \breakdot{2.1.1.1.2.2.}} Assume that \textbf{P1} has none of $DA$, $DB$ or $DF$. Then \textbf{P1} takes $BD$ and then one of $AD$ and $DF$, which is not taken. Without loss of generality, let this edge be $AD$.\\The potential base is $AB$.}
\label{fig:2.1.1.1.2.2}}
\end{floatrow}
\end{figure}

\begin{figure}[H]
\begin{floatrow}[3]
\ffigbox{\scalebox{.55}{
\begin{tikzpicture}
  \SetGraphUnit{3}
\GraphInit[vstyle=Normal]
  \SetVertexNormal[Shape      = circle,
                   LineWidth  = 1pt]
  \SetVertexMath
  \Vertex{A}
  \Vertex[x=1,y=1]{B}
  \Vertex[x=0,y=-2]{C}
  \Vertex[x=2,y=-2]{D}
  \Vertex[x=2,y=0]{E}
  \Vertex[x=1,y=2.5]{F}
  
  \SetVertexNormal[Shape = rectangle,
  					LineWidth = 1pt]
  \Vertex[x=2,y=2]{+2}
  \Edge(A)(B)
  \Edge(B)(E)
  \Edge(E)(C)
  \Edge(B)(F)
  \Edge(E)(F)
  \SetUpEdge[style={dashed}]
  \Edge(C)(D)
  \Edge(A)(E)
  \Edge(B)(C)
\end{tikzpicture}
}}{\caption{\textbf{Split-case \breakdot{2.1.1.2.}} Assume that \textbf{P1} does not have $EF$. Then \textbf{P2} takes this edge.
}
\label{fig:2.1.1.2}}

\ffigbox{\scalebox{.55}{
\begin{tikzpicture}
  \SetGraphUnit{3}
\GraphInit[vstyle=Normal]
  \SetVertexNormal[Shape      = circle,
                   LineWidth  = 1pt]
  \SetVertexMath
  \Vertex{A}
  \Vertex[x=1,y=1]{B}
  \Vertex[x=0,y=-2]{C}
  \Vertex[x=2,y=-2]{D}
  \Vertex[x=2,y=0]{E}
  \Vertex[x=1,y=2.5]{F}
  
  \SetVertexNormal[Shape = rectangle,
  					LineWidth = 1pt]
  \Vertex[x=2,y=2]{+1}
  \Edge(A)(B)
  \Edge(B)(E)
  \Edge(E)(C)
  \Edge(B)(F)
  \Edge(E)(F)
  \SetUpEdge[style={dashed}]
  \Edge(C)(D)
  \Edge(A)(E)
  \Edge(B)(C)
  \Edge[style={dashed, bend left=40}](C)(F)
\end{tikzpicture}
}}{\caption{\textbf{Split-case \breakdot{2.1.1.2.1.}} Assume that \textbf{P1} has $CF$.}
\label{fig:2.1.1.2.1}}

\ffigbox{\scalebox{.55}{
\begin{tikzpicture}
  \SetGraphUnit{3}
\GraphInit[vstyle=Normal]
  \SetVertexNormal[Shape      = circle,
                   LineWidth  = 1pt]
  \SetVertexMath
  \Vertex{A}
  \Vertex[x=1,y=1]{B}
  \Vertex[x=0,y=-2]{C}
  \Vertex[x=2,y=-2]{D}
  \Vertex[x=2,y=0]{E}
  \Vertex[x=1,y=2.5]{F}
  
  \SetVertexNormal[Shape = rectangle,
  					LineWidth = 1pt]
  \Edge(A)(B)
  \Edge(B)(E)
  \Edge(E)(C)
  \Edge(B)(F)
  \Edge(E)(F)
  \SetUpEdge[style={dashed}]
  \Edge(C)(D)
  \Edge(A)(E)
  \Edge(B)(C)
  \Edge(A)(F)
  \Edge[style={dashed, bend left=40}](C)(F)
\end{tikzpicture}
}
\scalebox{.55}{
\begin{tikzpicture}
  \SetGraphUnit{3}
\GraphInit[vstyle=Normal]
  \SetVertexNormal[Shape      = circle,
                   LineWidth  = 1pt]
  \SetVertexMath
  \Vertex{A}
  \Vertex[x=1,y=1]{B}
  \Vertex[x=0,y=-2]{C}
  \Vertex[x=2,y=-2]{D}
  \Vertex[x=2,y=0]{E}
  \Vertex[x=1,y=2.5]{F}
  \Vertex[x=2,y=1.5]{I}
  
  \SetVertexNormal[Shape = rectangle,
  					LineWidth = 1pt]
  \Vertex[x=2,y=2.5]{+1}
  \Edge(A)(B)
  \Edge(B)(E)
  \Edge(E)(C)
  \Edge(B)(F)
  \Edge(E)(I)
  \Edge(E)(F)
  \SetUpEdge[style={dashed}]
  \Edge(C)(D)
  \Edge(A)(E)
  \Edge(B)(C)
  \Edge(A)(F)
  \Edge[style={dashed, bend left=40}](C)(F)
\end{tikzpicture}
}}{\caption{\textbf{Marked split-case \breakdot{2.1.1.2.1.1.}} Assume that \textbf{P1} has $AF$. Then \textbf{P2} takes $EI$, where $I$ is a free vertex in $K^2$.\\\textbf{P1} lost 2 edges.}
\label{fig:2.1.1.2.1.1}}
\end{floatrow}
\end{figure}

\begin{figure}[H]
\begin{floatrow}[3]
\ffigbox{\scalebox{.55}{
\begin{tikzpicture}
  \SetGraphUnit{3}
\GraphInit[vstyle=Normal]
  \SetVertexNormal[Shape      = circle,
                   LineWidth  = 1pt]
  \SetVertexMath
  \Vertex{A}
  \Vertex[x=1,y=1]{B}
  \Vertex[x=0,y=-2]{C}
  \Vertex[x=2,y=-2]{D}
  \Vertex[x=2,y=0]{E}
  \Vertex[x=1,y=2.5]{F}
  \Vertex[x=2,y=1.5]{I}
  
  \SetVertexNormal[Shape = rectangle,
  					LineWidth = 1pt]
  \Edge(A)(B)
  \Edge(B)(E)
  \Edge(E)(C)
  \Edge(B)(F)
  \Edge(E)(I)
  \Edge(E)(F)
  \SetUpEdge[style={dashed}]
  \Edge(C)(D)
  \Edge(A)(E)
  \Edge(B)(C)
  \Edge(A)(F)
  \Edge(B)(I)
  \Edge[style={dashed, bend left=40}](C)(F)
\end{tikzpicture}
}
\scalebox{.55}{
\begin{tikzpicture}
  \SetGraphUnit{3}
\GraphInit[vstyle=Normal]
  \SetVertexNormal[Shape      = circle,
                   LineWidth  = 1pt]
  \SetVertexMath
  \Vertex{A}
  \Vertex[x=1,y=1]{B}
  \Vertex[x=0,y=-2]{C}
  \Vertex[x=2,y=-2]{D}
  \Vertex[x=2,y=0]{E}
  \Vertex[x=1,y=2.5]{F}
  \Vertex[x=2,y=1.5]{I}
  
  \SetVertexNormal[Shape = rectangle,
  					LineWidth = 1pt]
  \Vertex[x=2,y=2.5]{+1}
  \Edge(A)(B)
  \Edge(B)(E)
  \Edge(E)(C)
  \Edge(B)(F)
  \Edge(E)(I)
  \Edge(F)(I)
  \SetUpEdge[lw=3pt]
  \Edge(E)(F)
  \SetUpEdge[style={dashed}]
  \Edge(C)(D)
  \Edge(A)(E)
  \Edge(B)(C)
  \Edge(A)(F)
  \Edge(B)(I)
  \Edge[style={dashed, bend left=40}](C)(F)
\end{tikzpicture}
}}{\caption{\textbf{End-case \breakdot{2.1.1.2.1.1.1.}} Assume that \textbf{P1} has $BI$. Then \textbf{P2} takes $FI$.\\The potential base is $EF$.}
\label{fig:2.1.1.2.1.1.1}}

\ffigbox{\scalebox{.55}{
\begin{tikzpicture}
  \SetGraphUnit{3}
\GraphInit[vstyle=Normal]
  \SetVertexNormal[Shape      = circle,
                   LineWidth  = 1pt]
  \SetVertexMath
  \Vertex{A}
  \Vertex[x=1,y=1]{B}
  \Vertex[x=0,y=-2]{C}
  \Vertex[x=2,y=-2]{D}
  \Vertex[x=2,y=0]{E}
  \Vertex[x=1,y=2.5]{F}
  \Vertex[x=2,y=1.5]{I}
  
  \SetVertexNormal[Shape = rectangle,
  					LineWidth = 1pt]
  \Vertex[x=2,y=2.5]{+2}
  \Edge(A)(B)
  \Edge(E)(C)
  \Edge(B)(F)
  \Edge(E)(F)
  \Edge(E)(I)
  \Edge(B)(I)
  \SetUpEdge[lw=3pt]
  \Edge(B)(E)
  \SetUpEdge[style={dashed}]
  \Edge(C)(D)
  \Edge(A)(E)
  \Edge(B)(C)
  \Edge(A)(F)
  \Edge[style={dashed, bend left=40}](C)(F)
\end{tikzpicture}
}}{\caption{\textbf{End-case \breakdot{2.1.1.2.1.1.2.}} Assume that \textbf{P1} does not have $BI$. Then \textbf{P2} takes this edge.\\The potential base is $BE$.}
\label{fig:2.1.1.2.1.1.2}}

\ffigbox{
\scalebox{.55}{
\begin{tikzpicture}
  \SetGraphUnit{3}
\GraphInit[vstyle=Normal]
  \SetVertexNormal[Shape      = circle,
                   LineWidth  = 1pt]
  \SetVertexMath
  \Vertex{A}
  \Vertex[x=1,y=1]{B}
  \Vertex[x=0,y=-2]{C}
  \Vertex[x=2,y=-2]{D}
  \Vertex[x=2,y=0]{E}
  \Vertex[x=1,y=2.5]{F}
  
  \SetVertexNormal[Shape = rectangle,
  					LineWidth = 1pt]
  \Vertex[x=2,y=2]{+2}
  \Edge(A)(B)
  \Edge(B)(E)
  \Edge(E)(C)
  \Edge(B)(F)
  \Edge(E)(F)
  \Edge(A)(F)
  \SetUpEdge[style={dashed}]
  \Edge(C)(D)
  \Edge(A)(E)
  \Edge(B)(C)
  \Edge[style={dashed, bend left=40}](C)(F)
\end{tikzpicture}
}}{\caption{\textbf{Split-case \breakdot{2.1.1.2.1.2.}} Assume that \textbf{P1} does not have $AF$. Then \textbf{P2} takes this edge.
}
\label{fig:2.1.1.2.1.2}}
\end{floatrow}
\end{figure}

\begin{figure}[H]
\begin{floatrow}
\ffigbox{\scalebox{.55}{
\begin{tikzpicture}
  \SetGraphUnit{3}
\GraphInit[vstyle=Normal]
  \SetVertexNormal[Shape      = circle,
                   LineWidth  = 1pt]
  \SetVertexMath
  \Vertex{A}
  \Vertex[x=1,y=1]{B}
  \Vertex[x=0,y=-2]{C}
  \Vertex[x=2,y=-2]{D}
  \Vertex[x=2,y=0]{E}
  \Vertex[x=1,y=2.5]{F}
  
  \SetVertexNormal[Shape = rectangle,
  					LineWidth = 1pt]
  \Edge(A)(B)
  \Edge(B)(E)
  \Edge(E)(C)
  \Edge(B)(F)
  \Edge(E)(F)
  \Edge(A)(F)
  \SetUpEdge[style={dashed}]
  \Edge(C)(D)
  \Edge(A)(E)
  \Edge(B)(C)
  \Edge(B)(D)
  \Edge[style={dashed, bend left=40}](C)(F)
  \Edge[style={dashed, bend right=40}](D)(F)
\end{tikzpicture}
}}{\caption{\textbf{Split-case \breakdot{2.1.1.2.1.2.1.}} Assume that \textbf{P1} has both $BD$ and $DF$. Then \textbf{P2} takes $FI$, where $I$ is a free vertex in $K^2$. If \textbf{P1} does not take $BI$, \textbf{P2} takes it and stops. Otherwise, \textbf{P1} takes $BI$, then \textbf{P2} takes $FJ$, where $J$ is a free vertex in $K^2$. If \textbf{P1} does not take $BJ$, \textbf{P2} takes it and stops. Otherwise, \textbf{P1} takes $BJ$, then \textbf{P2} takes $FK$, where $K$ is a free vertex in $K^2$. If \textbf{P1} does not take $BK$, \textbf{P2} takes it and stops.
}
\label{fig:2.1.1.2.1.2.1}}

\ffigbox{
\scalebox{.55}{
\begin{tikzpicture}
  \SetGraphUnit{3}
\GraphInit[vstyle=Normal]
  \SetVertexNormal[Shape      = circle,
                   LineWidth  = 1pt]
  \SetVertexMath
  \Vertex{A}
  \Vertex[x=1,y=1]{B}
  \Vertex[x=0,y=-2]{C}
  \Vertex[x=2,y=-2]{D}
  \Vertex[x=2,y=0]{E}
  \Vertex[x=1,y=2.5]{F}
  \Vertex[x=3.5,y=0.5]{I}
  \Vertex[x=3.5,y=1.5]{J}
  \Vertex[x=3,y=2.5]{K}
  
  \SetVertexNormal[Shape = rectangle,
  					LineWidth = 1pt]
  \Edge(A)(B)
  \Edge(B)(E)
  \Edge(E)(C)
  \Edge(B)(F)
  \Edge(E)(F)
  \Edge(A)(F)
  \Edge(F)(I)
  \Edge(F)(J)
  \Edge(F)(K)
  \SetUpEdge[style={dashed}]
  \Edge(C)(D)
  \Edge(A)(E)
  \Edge(B)(C)
  \Edge(B)(D)
  \Edge(B)(I)
  \Edge(B)(J)
  \Edge(B)(K)
  \Edge[style={dashed, bend left=40}](C)(F)
  \Edge[style={dashed, bend right=40}](D)(F)
\end{tikzpicture}}
\scalebox{.55}{
\begin{tikzpicture}
  \SetGraphUnit{3}
\GraphInit[vstyle=Normal]
  \SetVertexNormal[Shape      = circle,
                   LineWidth  = 1pt]
  \SetVertexMath
  \Vertex{A}
  \Vertex[x=1,y=1]{B}
  \Vertex[x=0,y=-2]{C}
  \Vertex[x=2,y=-2]{D}
  \Vertex[x=2,y=0]{E}
  \Vertex[x=1,y=2.5]{F}
  \Vertex[x=3.5,y=0.5]{I}
  \Vertex[x=3.5,y=1.5]{J}
  \Vertex[x=3,y=2.5]{K}
  
  \SetVertexNormal[Shape = rectangle,
  					LineWidth = 1pt]
  \Vertex[x=0,y=2.5]{+2}
  \Edge(A)(B)
  \Edge(B)(E)
  \Edge(E)(C)
  \Edge(B)(F)
  \Edge(A)(F)
  \Edge(F)(I)
  \Edge(F)(J)
  \Edge(F)(K)
  \Edge(E)(I)
  \Edge(E)(J)
  \SetUpEdge[lw=3pt]
  \Edge(E)(F)
  \SetUpEdge[style={dashed}]
  \Edge(C)(D)
  \Edge(A)(E)
  \Edge(B)(C)
  \Edge(B)(D)
  \Edge(B)(I)
  \Edge(B)(J)
  \Edge(B)(K)
  \Edge[style={dashed, bend left=40}](C)(F)
  \Edge[style={dashed, bend right=40}](D)(F)
\end{tikzpicture}}}{
\caption{\textbf{Special end-case \breakdot{2.1.1.2.1.2.1.1.}} Assume that in the strategy described in the previous case \textbf{P1} did take $BI$, $BJ$ and $BK$ successively. Then \textbf{P2} takes $EI$ and one of $EJ$ and $EK$ which remains free after \textbf{P1}'s turn. Without loss of generality, let this edge be $EJ$.\\This case is treated separately in Section~\ref{s4.4}.}
\label{fig:2.1.1.2.1.2.1.1}}
\end{floatrow}
\end{figure}

\begin{figure}[H]
\begin{floatrow}
\ffigbox{\scalebox{.55}{
\begin{tikzpicture}
  \SetGraphUnit{3}
\GraphInit[vstyle=Normal]
  \SetVertexNormal[Shape      = circle,
                   LineWidth  = 1pt]
  \SetVertexMath
  \Vertex{A}
  \Vertex[x=1,y=1]{B}
  \Vertex[x=0,y=-2]{C}
  \Vertex[x=2,y=-2]{D}
  \Vertex[x=2,y=0]{E}
  \Vertex[x=1,y=2.5]{F}
  \Vertex[x=3.5,y=0.5]{I}
  \Vertex[x=3.5,y=1.5]{J}
  \Vertex[x=3,y=2.5]{K}
  
  \SetVertexNormal[Shape = rectangle,
  					LineWidth = 1pt]
  \Vertex[x=0,y=2.5]{+2}
  \Edge(A)(B)
  \Edge(B)(E)
  \Edge(E)(C)
  \Edge(E)(F)
  \Edge(A)(F)
  \Edge(F)(K)
  \Edge(B)(K)
  \SetUpEdge[lw=3pt]
  \Edge(B)(F)
  \SetUpEdge[style={dashed}]
  \Edge(C)(D)
  \Edge(A)(E)
  \Edge(B)(C)
  \Edge(B)(D)
  \Edge(B)(I)
  \Edge(B)(J)
  \Edge[style={dashed, bend left=40}](C)(F)
  \Edge[style={dashed, bend right=40}](D)(F)
\end{tikzpicture}
}}{\caption{\textbf{Special end-case \breakdot{2.1.1.2.1.2.1.2.}} Assume that in the strategy described in split-case \textbf{\breakdot{2.1.1.2.1.2.1.}} \textbf{P1} did not take $BI$, then $BJ$ and then $BK$. Without loss of generality we can assume that $BK$ and $FK$ were taken by \textbf{P2}, provided that \textbf{P2} concede both $FI$ and $FJ$ if they were taken by \textbf{P2} and that we give \textbf{P1} the edges $BI$ and $BJ$ regardless if \textbf{P1} took them (this corresponds to the case where \textbf{P1} responded the first two times, but not the third, and \textbf{P2} promises not to use the first two edges he acquired).\\This case is treated separately in Section~\ref{s4.4}.}
\label{fig:2.1.1.2.1.2.1.2}}

\ffigbox{
\scalebox{.55}{
\begin{tikzpicture}
  \SetGraphUnit{3}
\GraphInit[vstyle=Normal]
  \SetVertexNormal[Shape      = circle,
                   LineWidth  = 1pt]
  \SetVertexMath
  \Vertex{A}
  \Vertex[x=1,y=1]{B}
  \Vertex[x=0,y=-2]{C}
  \Vertex[x=2,y=-2]{D}
  \Vertex[x=2,y=0]{E}
  \Vertex[x=1,y=2.5]{F}
  
  \SetVertexNormal[Shape = rectangle,
  					LineWidth = 1pt]
  \Vertex[x=2,y=2]{+2}
  \Edge(A)(B)
  \Edge(B)(E)
  \Edge(E)(C)
  \Edge(E)(F)
  \Edge(A)(F)
  \SetUpEdge[lw=3pt]
  \Edge(B)(F)
  \SetUpEdge[style={dashed}]
  \Edge(C)(D)
  \Edge(A)(E)
  \Edge(B)(C)
  \Edge[style={dashed, bend left=40}](C)(F)
\end{tikzpicture}
}}{\caption{\textbf{End-case \breakdot{2.1.1.2.1.2.2.}} Assume that \textbf{P1} does not have both $BD$ and $DF$, so that \textbf{P1} does not have a $2\Delta$-configuration for $BF$.\\
The potential base is $BF$.
}
\label{fig:2.1.1.2.1.2.2}}
\end{floatrow}
\end{figure}

\begin{figure}[H]
\begin{floatrow}[3]
\ffigbox{
\scalebox{.55}{
\begin{tikzpicture}
  \SetGraphUnit{3}
\GraphInit[vstyle=Normal]
  \SetVertexNormal[Shape      = circle,
                   LineWidth  = 1pt]
  \SetVertexMath
  \Vertex{A}
  \Vertex[x=1,y=1]{B}
  \Vertex[x=0,y=-2]{C}
  \Vertex[x=2,y=-2]{D}
  \Vertex[x=2,y=0]{E}
  \Vertex[x=1,y=2.5]{F}
  
  \SetVertexNormal[Shape = rectangle,
  					LineWidth = 1pt]
  \Vertex[x=2,y=2]{+3}
  \Edge(A)(B)
  \Edge(B)(E)
  \Edge(E)(C)
  \Edge(B)(F)
  \Edge[style={bend left=40}](C)(F)
  \SetUpEdge[lw=3pt]
  \Edge(E)(F)
  \SetUpEdge[style={dashed}]
  \Edge(C)(D)
  \Edge(A)(E)
  \Edge(B)(C)
\end{tikzpicture}
}}{\caption{\textbf{End-case \breakdot{2.1.1.2.2.}} Assume that \textbf{P1} does not have $CF$. Then \textbf{P2} takes this edge.\\The potential base is $EF$.}
\label{fig:2.1.1.2.2}}

\ffigbox{\scalebox{.55}{
\begin{tikzpicture}
  \SetGraphUnit{3}
\GraphInit[vstyle=Normal]
  \SetVertexNormal[Shape      = circle,
                   LineWidth  = 1pt]
  \SetVertexMath
  \Vertex{A}
  \Vertex[x=1,y=1]{B}
  \Vertex[x=0,y=-2]{C}
  \Vertex[x=2,y=-2]{D}
  \Vertex[x=2,y=0]{E}
  
  \SetVertexNormal[Shape = rectangle,
  					LineWidth = 1pt]
  \Vertex[x=0,y=1.5]{+2}
  \Edge(A)(B)
  \Edge(B)(E)
  \Edge(E)(C)
  \Edge(B)(C)
  \SetUpEdge[style={dashed}]
  \Edge(C)(D)
  \Edge(A)(E)
\end{tikzpicture}
}}{\caption{\textbf{Split-case \breakdot{2.1.2.}} Assume that \textbf{P1} does not have $BC$. Then \textbf{P2} takes this edge.}
\label{fig:2.1.2}}

\ffigbox{\scalebox{.55}{
\begin{tikzpicture}
  \SetGraphUnit{3}
\GraphInit[vstyle=Normal]
  \SetVertexNormal[Shape      = circle,
                   LineWidth  = 1pt]
  \SetVertexMath
  \Vertex{A}
  \Vertex[x=1,y=1]{B}
  \Vertex[x=0,y=-2]{C}
  \Vertex[x=2,y=-2]{D}
  \Vertex[x=2,y=0]{E}
  
  \SetVertexNormal[Shape = rectangle,
  					LineWidth = 1pt]
  \Vertex[x=0,y=1.5]{+1}
  \Edge(A)(B)
  \Edge(B)(E)
  \Edge(E)(C)
  \Edge(B)(C)
  \SetUpEdge[style={dashed}]
  \Edge(C)(D)
  \Edge(A)(E)
  \Edge(A)(C)
\end{tikzpicture}
}}{\caption{\textbf{Split-case \breakdot{2.1.2.1.}} Assume that \textbf{P1} has $AC$.}
\label{fig:2.1.2.1}}
\end{floatrow}
\end{figure}

\begin{figure}[H]
\begin{floatrow}[3]
\ffigbox{
\scalebox{.55}{
\begin{tikzpicture}
  \SetGraphUnit{3}
\GraphInit[vstyle=Normal]
  \SetVertexNormal[Shape      = circle,
                   LineWidth  = 1pt]
  \SetVertexMath
  \Vertex{A}
  \Vertex[x=1,y=1]{B}
  \Vertex[x=0,y=-2]{C}
  \Vertex[x=2,y=-2]{D}
  \Vertex[x=2,y=0]{E}
  
  \SetVertexNormal[Shape = rectangle,
  					LineWidth = 1pt]
  \Edge(A)(B)
  \Edge(B)(E)
  \Edge(E)(C)
  \Edge(B)(C)
  \SetUpEdge[style={dashed}]
  \Edge(C)(D)
  \Edge(A)(E)
  \Edge(A)(C)
  \Edge(D)(E)
\end{tikzpicture}
}
\scalebox{.55}{
\begin{tikzpicture}
  \SetGraphUnit{3}
\GraphInit[vstyle=Normal]
  \SetVertexNormal[Shape      = circle,
                   LineWidth  = 1pt]
  \SetVertexMath
  \Vertex{A}
  \Vertex[x=1,y=1]{B}
  \Vertex[x=0,y=-2]{C}
  \Vertex[x=2,y=-2]{D}
  \Vertex[x=2,y=0]{E}
  \Vertex[x=2,y=1.5]{F}
  
  \SetVertexNormal[Shape = rectangle,
  					LineWidth = 1pt]
  \Vertex[x=0,y=1.5]{+2}
  \Edge(A)(B)
  \Edge(B)(E)
  \Edge(E)(C)
  \Edge(B)(C)
  \Edge(A)(D)
  \Edge(B)(F)
  \SetUpEdge[style={dashed}]
  \Edge(C)(D)
  \Edge(A)(E)
  \Edge(A)(C)
  \Edge(D)(E)
\end{tikzpicture}
}}{\caption{\textbf{Marked split-case \breakdot{2.1.2.1.1.}} Assume that \textbf{P1} has $DE$. Then \textbf{P2} takes $AD$ and then $BF$, where $F$ is a free vertex in $K^2$.\\\textbf{P1} lost $2$ edges.}
\label{fig:2.1.2.1.1}}

\ffigbox{\scalebox{.55}{
\begin{tikzpicture}
  \SetGraphUnit{3}
\GraphInit[vstyle=Normal]
  \SetVertexNormal[Shape      = circle,
                   LineWidth  = 1pt]
  \SetVertexMath
  \Vertex{A}
  \Vertex[x=1,y=1]{B}
  \Vertex[x=0,y=-2]{C}
  \Vertex[x=2,y=-2]{D}
  \Vertex[x=2,y=0]{E}
  \Vertex[x=2,y=1.5]{F}
  
  \SetVertexNormal[Shape = rectangle,
  					LineWidth = 1pt]
  \Vertex[x=0,y=1.5]{+1}
  \Edge(A)(B)
  \Edge(B)(E)
  \Edge(E)(C)
  \Edge(A)(D)
  \Edge(B)(F)
  \Edge(B)(C)
  \SetUpEdge[style={dashed}]
  \Edge(C)(D)
  \Edge(A)(E)
  \Edge(A)(C)
  \Edge(D)(E)
  \Edge(E)(F)
\end{tikzpicture}
}
\scalebox{.55}{
\begin{tikzpicture}
  \SetGraphUnit{3}
\GraphInit[vstyle=Normal]
  \SetVertexNormal[Shape      = circle,
                   LineWidth  = 1pt]
  \SetVertexMath
  \Vertex{A}
  \Vertex[x=1,y=1]{B}
  \Vertex[x=0,y=-2]{C}
  \Vertex[x=2,y=-2]{D}
  \Vertex[x=2,y=0]{E}
  \Vertex[x=2,y=1.5]{F}
  
  \SetVertexNormal[Shape = rectangle,
  					LineWidth = 1pt]
  \Vertex[x=0,y=1.5]{+2}
  \Edge(A)(B)
  \Edge(B)(E)
  \Edge(E)(C)
  \Edge(A)(D)
  \Edge(B)(F)
  \Edge(C)(F)
  \SetUpEdge[lw=3pt]
  \Edge(B)(C)
  \SetUpEdge[style={dashed}]
  \Edge(C)(D)
  \Edge(A)(E)
  \Edge(A)(C)
  \Edge(D)(E)
  \Edge(E)(F)
\end{tikzpicture}
}}{\caption{\textbf{End-case \breakdot{2.1.2.1.1.1.}} Assume that \textbf{P1} has $EF$. Then he does not have $CF$, as $F$ was a \textbf{P1}-free vertex before his move. Then \textbf{P2} takes $CF$.\\The potential base is $BC$.}
\label{fig:2.1.2.1.1.1}}

\ffigbox{\scalebox{.55}{
\begin{tikzpicture}
  \SetGraphUnit{3}
\GraphInit[vstyle=Normal]
  \SetVertexNormal[Shape      = circle,
                   LineWidth  = 1pt]
  \SetVertexMath
  \Vertex{A}
  \Vertex[x=1,y=1]{B}
  \Vertex[x=0,y=-2]{C}
  \Vertex[x=2,y=-2]{D}
  \Vertex[x=2,y=0]{E}
  \Vertex[x=2,y=1.5]{F}
  
  \SetVertexNormal[Shape = rectangle,
  					LineWidth = 1pt]
  \Vertex[x=0,y=1.5]{+3}
  \Edge(A)(B)
  \Edge(E)(C)
  \Edge(B)(C)
  \Edge(A)(D)
  \Edge(B)(F)
  \Edge(E)(F)
  \SetUpEdge[lw=3pt]
  \Edge(B)(E)
  \SetUpEdge[style={dashed}]
  \Edge(C)(D)
  \Edge(A)(E)
  \Edge(A)(C)
  \Edge(D)(E)
\end{tikzpicture}
}}{\caption{\textbf{End-case \breakdot{2.1.2.1.1.2.}} Assume that \textbf{P1} does not have $EF$. Then \textbf{P2} takes this edge.\\The potential base is $BE$.}
\label{fig:2.1.2.1.1.2}}
\end{floatrow}
\end{figure}

\begin{figure}[H]
\begin{floatrow}[3]
\ffigbox{
\scalebox{.55}{
\begin{tikzpicture}
  \SetGraphUnit{3}
\GraphInit[vstyle=Normal]
  \SetVertexNormal[Shape      = circle,
                   LineWidth  = 1pt]
  \SetVertexMath
  \Vertex{A}
  \Vertex[x=1,y=1]{B}
  \Vertex[x=0,y=-2]{C}
  \Vertex[x=2,y=-2]{D}
  \Vertex[x=2,y=0]{E}
  
  \SetVertexNormal[Shape = rectangle,
  					LineWidth = 1pt]
  \Vertex[x=0,y=1.5]{+2}
  \Edge(A)(B)
  \Edge(B)(E)
  \Edge(E)(C)
  \Edge(B)(C)
  \Edge(D)(E)
  \SetUpEdge[style={dashed}]
  \Edge(C)(D)
  \Edge(A)(E)
  \Edge(A)(C)
\end{tikzpicture}
}}{\caption{\textbf{Marked split-case \breakdot{2.1.2.1.2.}} Assume that \textbf{P1} does not have $DE$. Then \textbf{P2} takes this edge.\\\textbf{P1} lost an edge.}
\label{fig:2.1.2.1.2}}

\ffigbox{\scalebox{.55}{
\begin{tikzpicture}
  \SetGraphUnit{3}
\GraphInit[vstyle=Normal]
  \SetVertexNormal[Shape      = circle,
                   LineWidth  = 1pt]
  \SetVertexMath
  \Vertex{A}
  \Vertex[x=1,y=1]{B}
  \Vertex[x=0,y=-2]{C}
  \Vertex[x=2,y=-2]{D}
  \Vertex[x=2,y=0]{E}
  
  \SetVertexNormal[Shape = rectangle,
  					LineWidth = 1pt]
  \Vertex[x=0,y=1.5]{+1}
  \Edge(A)(B)
  \Edge(B)(E)
  \Edge(E)(C)
  \Edge(B)(C)
  \Edge(D)(E)
  \SetUpEdge[style={dashed}]
  \Edge(C)(D)
  \Edge(A)(E)
  \Edge(A)(C)
  \Edge(B)(D)
\end{tikzpicture}
}
\scalebox{.55}{
\begin{tikzpicture}
  \SetGraphUnit{3}
\GraphInit[vstyle=Normal]
  \SetVertexNormal[Shape      = circle,
                   LineWidth  = 1pt]
  \SetVertexMath
  \Vertex{A}
  \Vertex[x=1,y=1]{B}
  \Vertex[x=0,y=-2]{C}
  \Vertex[x=2,y=-2]{D}
  \Vertex[x=2,y=0]{E}
  \Vertex[x=2,y=1.5]{F}
  
  \SetVertexNormal[Shape = rectangle,
  					LineWidth = 1pt]
  \Vertex[x=0,y=1.5]{+2}
  \Edge(A)(B)
  \Edge(B)(E)
  \Edge(E)(C)
  \Edge(D)(E)
  \Edge(B)(F)
  \Edge(B)(C)
  \SetUpEdge[style={dashed}]
  \Edge(C)(D)
  \Edge(A)(E)
  \Edge(A)(C)
  \Edge(B)(D)
\end{tikzpicture}
}}{\caption{\textbf{Marked split-case \breakdot{2.1.2.1.2.1.}} Assume that \textbf{P1} has $BD$. Then \textbf{P2} takes $BF$, where $F$ is a free vertex in $K^2$.\\\textbf{P1} lost $2$ edges.}
\label{fig:2.1.2.1.2.1}}

\ffigbox{\scalebox{.55}{\begin{tikzpicture}
  \SetGraphUnit{3}
\GraphInit[vstyle=Normal]
  \SetVertexNormal[Shape      = circle,
                   LineWidth  = 1pt]
  \SetVertexMath
  \Vertex{A}
  \Vertex[x=1,y=1]{B}
  \Vertex[x=0,y=-2]{C}
  \Vertex[x=2,y=-2]{D}
  \Vertex[x=2,y=0]{E}
  \Vertex[x=2,y=1.5]{F}
  
  \SetVertexNormal[Shape = rectangle,
  					LineWidth = 1pt]
  \Vertex[x=0,y=1.5]{+1}
  \Edge(A)(B)
  \Edge(B)(E)
  \Edge(E)(C)
  \Edge(D)(E)
  \Edge(B)(F)
  \Edge(B)(C)
  \SetUpEdge[style={dashed}]
  \Edge(C)(D)
  \Edge(A)(E)
  \Edge(A)(C)
  \Edge(B)(D)
  \Edge(E)(F)
\end{tikzpicture}
}
\scalebox{.55}{
\begin{tikzpicture}
  \SetGraphUnit{3}
\GraphInit[vstyle=Normal]
  \SetVertexNormal[Shape      = circle,
                   LineWidth  = 1pt]
  \SetVertexMath
  \Vertex{A}
  \Vertex[x=1,y=1]{B}
  \Vertex[x=0,y=-2]{C}
  \Vertex[x=2,y=-2]{D}
  \Vertex[x=2,y=0]{E}
  \Vertex[x=2,y=1.5]{F}
  
  \SetVertexNormal[Shape = rectangle,
  					LineWidth = 1pt]
  \Vertex[x=0,y=1.5]{+2}
  \Edge(A)(B)
  \Edge(B)(E)
  \Edge(E)(C)
  \Edge(D)(E)
  \Edge(B)(F)
  \Edge(C)(F)
  \SetUpEdge[lw=3pt]
  \Edge(B)(C)
  \SetUpEdge[style={dashed}]
  \Edge(C)(D)
  \Edge(A)(E)
  \Edge(A)(C)
  \Edge(B)(D)
  \Edge(E)(F)
\end{tikzpicture}
}}{\caption{\textbf{End-case \breakdot{2.1.2.1.2.1.1.}} Assume that \textbf{P1} has $EF$. Then he does not have $CF$, as $F$ was a \textbf{P1}-free vertex before his move. Then \textbf{P2} takes $CF$.\\The potential base is $BC$.}
\label{fig:2.1.2.1.2.1.1}}
\end{floatrow}
\end{figure}

\begin{figure}[H]
\begin{floatrow}[3]
\ffigbox{
\scalebox{.55}{
\begin{tikzpicture}
  \SetGraphUnit{3}
\GraphInit[vstyle=Normal]
  \SetVertexNormal[Shape      = circle,
                   LineWidth  = 1pt]
  \SetVertexMath
  \Vertex{A}
  \Vertex[x=1,y=1]{B}
  \Vertex[x=0,y=-2]{C}
  \Vertex[x=2,y=-2]{D}
  \Vertex[x=2,y=0]{E}
  \Vertex[x=2,y=1.5]{F}
  
  \SetVertexNormal[Shape = rectangle,
  					LineWidth = 1pt]
  \Vertex[x=0,y=1.5]{+3}
  \Edge(A)(B)
  \Edge(E)(C)
  \Edge(B)(C)
  \Edge(D)(E)
  \Edge(B)(F)
  \Edge(E)(F)
  \SetUpEdge[lw=3pt]
  \Edge(B)(E)
  \SetUpEdge[style={dashed}]
  \Edge(C)(D)
  \Edge(A)(E)
  \Edge(A)(C)
  \Edge(B)(D)
\end{tikzpicture}
}}{\caption{\textbf{End-case \breakdot{2.1.2.1.2.1.2.}} Assume that \textbf{P1} does not have $EF$. Then \textbf{P2} takes this edge.\\The potential base is $BE$.}
\label{fig:2.1.2.1.2.1.2}}

\ffigbox{\scalebox{.55}{
\begin{tikzpicture}
  \SetGraphUnit{3}
\GraphInit[vstyle=Normal]
  \SetVertexNormal[Shape      = circle,
                   LineWidth  = 1pt]
  \SetVertexMath
  \Vertex{A}
  \Vertex[x=1,y=1]{B}
  \Vertex[x=0,y=-2]{C}
  \Vertex[x=2,y=-2]{D}
  \Vertex[x=2,y=0]{E}
  
  \SetVertexNormal[Shape = rectangle,
  					LineWidth = 1pt]
  \Vertex[x=0,y=1.5]{+3}
  \Edge(A)(B)
  \Edge(E)(C)
  \Edge(B)(C)
  \Edge(D)(E)
  \Edge(B)(D)
  \SetUpEdge[lw=3pt]
  \Edge(B)(E)
  \SetUpEdge[style={dashed}]
  \Edge(C)(D)
  \Edge(A)(E)
  \Edge(A)(C)
\end{tikzpicture}
}}{\caption{\textbf{End-case \breakdot{2.1.2.1.2.2.}} Assume that \textbf{P1} does not have $BD$. Then \textbf{P2} takes this edge.\\The potential base is $BE$.}
\label{fig:2.1.2.1.2.2}}

\ffigbox{\scalebox{.55}{
\begin{tikzpicture}
  \SetGraphUnit{3}
\GraphInit[vstyle=Normal]
  \SetVertexNormal[Shape      = circle,
                   LineWidth  = 1pt]
  \SetVertexMath
  \Vertex{A}
  \Vertex[x=1,y=1]{B}
  \Vertex[x=0,y=-2]{C}
  \Vertex[x=2,y=-2]{D}
  \Vertex[x=2,y=0]{E}
  
  \SetVertexNormal[Shape = rectangle,
  					LineWidth = 1pt]
  \Vertex[x=0,y=1.5]{+3}
  \Edge(A)(B)
  \Edge(B)(E)
  \Edge(E)(C)
  \Edge(A)(C)
  \SetUpEdge[lw=3pt]
  \Edge(B)(C)
  \SetUpEdge[style={dashed}]
  \Edge(C)(D)
  \Edge(A)(E)
\end{tikzpicture}
}}{\caption{\textbf{End-case \breakdot{2.1.2.2.}} Assume \textbf{P1} does not have $AC$. Then \textbf{P2} takes this edge.\\The potential base is $BC$.}
\label{fig:2.1.2.2}}
\end{floatrow}
\end{figure}

\begin{figure}[H]
\begin{floatrow}[3]
\ffigbox{
\scalebox{.55}{
\begin{tikzpicture}
  \SetGraphUnit{3}
\GraphInit[vstyle=Normal]
  \SetVertexNormal[Shape      = circle,
                   LineWidth  = 1pt]
  \SetVertexMath
  \Vertex{A}
  \Vertex[x=1,y=1]{B}
  \Vertex[x=0,y=-2]{C}
  \Vertex[x=2,y=-2]{D}
  \Vertex[x=2,y=0]{E}
  
  \SetVertexNormal[Shape = rectangle,
  					LineWidth = 1pt]
  \Vertex[x=0,y=1.5]{+2}
  \Edge(A)(B)
  \Edge(B)(E)fig
  \Edge(A)(E)
  \SetUpEdge[style={dashed}]
  \Edge(C)(D)
\end{tikzpicture}
}
\scalebox{.55}{
\begin{tikzpicture}
  \SetGraphUnit{3}
\GraphInit[vstyle=Normal]
  \SetVertexNormal[Shape      = circle,
                   LineWidth  = 1pt]
  \SetVertexMath
  \Vertex{A}
  \Vertex[x=1,y=1]{B}
  \Vertex[x=0,y=-2]{C}
  \Vertex[x=2,y=-2]{D}
  \Vertex[x=2,y=0]{E}
  \Vertex[x=1,y=2.5]{F}
  
  \SetVertexNormal[Shape = rectangle,
  					LineWidth = 1pt]
  \Vertex[x=0,y=2]{+3}
  \Edge(B)(E)
  \Edge(A)(E)
  \Edge(B)(F)
  \Edge(A)(B)
  \SetUpEdge[style={dashed}]
  \Edge(C)(D)
\end{tikzpicture}
}}{\caption{\textbf{Split-case 2.2.} Assume that \textbf{P1} does not have $AE$. Then \textbf{P2} takes this edge and plays from a \textbf{P1}-free vertex among $A$, $B$ and $E$ to a free vertex in $K^2$, say $F$. Without loss of generality assume \textbf{P2} played from $B$. Hence, after \textbf{P1}'s turn there is a vertex among $A$ and $E$ of \textbf{P1}-degree at most $1$.}
\label{fig:2.2}}

\ffigbox{\scalebox{.55}{
\begin{tikzpicture}
  \SetGraphUnit{3}
\GraphInit[vstyle=Normal]
  \SetVertexNormal[Shape      = circle,
                   LineWidth  = 1pt]
  \SetVertexMath
  \Vertex{A}
  \Vertex[x=1,y=1]{B}
  \Vertex[x=0,y=-2]{C}
  \Vertex[x=2,y=-2]{D}
  \Vertex[x=2,y=0]{E}
  \Vertex[x=1,y=2.5]{F}
  
  \SetVertexNormal[Shape = rectangle,
  					LineWidth = 1pt]
  \Vertex[x=0,y=1.5]{+2}
  \Edge(A)(B)
  \Edge(B)(E)
  \Edge(A)(E)
  \Edge(B)(F)
  \SetUpEdge[style={dashed}]
  \Edge(C)(D)
  \Edge(E)(F)
\end{tikzpicture}
}\quad
\scalebox{.55}{
\begin{tikzpicture}
  \SetGraphUnit{3}
\GraphInit[vstyle=Normal]
  \SetVertexNormal[Shape      = circle,
                   LineWidth  = 1pt]
  \SetVertexMath
  \Vertex{A}
  \Vertex[x=1,y=1]{B}
  \Vertex[x=0,y=-2]{C}
  \Vertex[x=2,y=-2]{D}
  \Vertex[x=2,y=0]{E}
  \Vertex[x=1,y=2.5]{F}
  
  \SetVertexNormal[Shape = rectangle,
  					LineWidth = 1pt]
  \Vertex[x=0,y=2]{+3}
  \Edge(B)(E)
  \Edge(A)(E)
  \Edge(A)(F)
  \Edge(B)(F)
  \SetUpEdge[lw=3pt]
  \Edge(A)(B)
  \SetUpEdge[style={dashed}]
  \Edge(C)(D)
  \Edge(E)(F)
\end{tikzpicture}
}}{\caption{\textbf{End-case \breakdot{2.2.1.}} Assume \textbf{P1} has $AF$ or $EF$. Without loss of generality assume \textbf{P1} has $EF$. Then he does not have $AF$, as $F$ was a \textbf{P1}-free vertex before his last turn. Then \textbf{P2} takes $AF$.\\The potential base is $AB$.}
\label{fig:2.2.1}}

\ffigbox{\scalebox{.55}{
\begin{tikzpicture}
  \SetGraphUnit{3}
\GraphInit[vstyle=Normal]
  \SetVertexNormal[Shape      = circle,
                   LineWidth  = 1pt]
  \SetVertexMath
  \Vertex{A}
  \Vertex[x=1,y=1]{B}
  \Vertex[x=0,y=-2]{C}
  \Vertex[x=2,y=-2]{D}
  \Vertex[x=2,y=0]{E}
  \Vertex[x=1,y=2.5]{F}
  
  \SetVertexNormal[Shape = rectangle,
  					LineWidth = 1pt]
  \Vertex[x=0,y=1.5]{+4}
  \Edge(B)(E)
  \Edge(A)(E)
  \Edge(A)(F)
  \Edge(B)(F)
  \SetUpEdge[lw=3pt]
  \Edge(A)(B)
  \SetUpEdge[style={dashed}]
  \Edge(C)(D)
\end{tikzpicture}
}}{\caption{\textbf{End-case \breakdot{2.2.2.}} Assume \textbf{P1} does not have any of $AF$ and $EF$. Without loss of generality assume $\deg_{\textbf{P1}}(A)\le 1$. Then \textbf{P2} takes $AF$.\\The potential base is $AB$, as $\deg_\textbf{P1}(A)+\deg_{\textbf{P1}}(B) \leq 1+2$.}
\label{fig:2.2.2}}
\end{floatrow}
\end{figure}

\subsection{Special end-cases}
\label{s4.4}
In this section we present a drawing strategy for \textbf{P2} in the two special end-cases, which cannot be dealt with by directly applying Lemma~\ref{lem:2}.

\paragraph{End-case \breakdot{2.1.1.2.1.2.1.1.} Figure~\ref{fig:2.1.1.2.1.2.1.1}-right} Note that \textbf{P1} lost $3$ edges (i.e. he has not won yet). If \textbf{P1} does not have $EK$, \textbf{P2} takes it and creates a $G$ before him. Otherwise, one of \textbf{P1}'s additional edges is $EK$ and hence \textbf{P1} lost $4$ edges. Moreover, by inspection we observe that \textbf{P1} has no 4-cycles $E, C_1, C_2, C_3$ with the edge $EC_2$ not taken by \textbf{P2} and at most one triangle $ET_1T_2$. Let $\mathcal{E}_0$ be the initial set of edges taken by \textbf{P1} in $K^2$, so that $|\mathcal{E}_0|\le11$. Then \textbf{P2} takes edges from $F$ to free vertices $L_i$ for $i=1,\dots,k$ until at some point \textbf{P1} does not take the edge $EL_k$. Then \textbf{P2} takes $EL_k$ and completes a $G$. The game stops after \textbf{P2}'s move. Let $\mathcal{E}_1=\{EL_i,1\le i<k\}$ be the star taken by \textbf{P1} and $\mathcal{E}_2$ be his last edge. We claim that \textbf{P1} does not have a $G$. Indeed, \textbf{P1} cannot have a $G$ that does not intersect $\mathcal{E}_1$, as $e_\textbf{P1}(G)\le12-4$ since \textbf{P1} lost $4$ edges and $|\mathcal{E}_0|+|\mathcal{E}_2|\le 12$. On the other hand, \textbf{P1} cannot have a $G$ that contains some $EL_i$, as neither of the vertices $E$ and $L_i$ can be the base. This is because \textbf{P1} has at most $2$ triangles containing $E$ by the above observation and the fact that $\deg_\textbf{P1}(L_i)\le 2$.

\paragraph{End-case \breakdot{2.1.1.2.1.2.1.2.} Figure~\ref{fig:2.1.1.2.1.2.1.2}} Note that \textbf{P1} lost $3$ edges.  (i.e. he has not won yet). Moreover, by inspection we observe that \textbf{P1} has either at most one 4-cycle $F, C_1, C_2, C_3$ with the edge $FC_2$ not taken by anyone and at most one triangle $FT_1T_2$, or no such 4-cycle and at most $2$ such triangles. Let $\mathcal{E}_0$ be the initial set of edges taken by \textbf{P1} in $K^2$, so that $|\mathcal{E}_0|=10$.  Then \textbf{P2} takes edges from $B$ to free vertices $L_i$ for $i=1,\dots,k$ until at some point \textbf{P1} does not take the edge $FL_k$. Then \textbf{P2} takes $FL_k$ and completes a $G$. The game stops after \textbf{P2}'s move. Let $\mathcal{E}_1=\{FL_i,1\le i<k\}$ be the star taken by \textbf{P1} and $\mathcal{E}_2$ be his last edge. We claim that \textbf{P1} does not have a $G$. Indeed, \textbf{P1} cannot have a $G$ that does not intersect $\mathcal{E}_1$, as $e_\textbf{P1}(G)\le 11-3$ since \textbf{P1} lost $3$ edges and $|\mathcal{E}_0|+|\mathcal{E}_2|\le 11$. On the other hand, \textbf{P1} cannot have a $G$ that contains some $FL_i$, as neither of the vertices $F$ and $L_i$ can be the base. This is because \textbf{P1} has at most $3$ triangles containing $F$ by the above observation and the fact that $\deg_\textbf{P1}(L_i)\le 2$.

This analysis concludes the proof of Theorem~\ref{th:main}.\qed

\section{From graphs on two copies to hypergaphs}
\label{sechyp}
In this section we show that $\mathcal{R}(K_\omega^{(4)},G')$ is a draw. Recall from Definition~\ref{def:G'} that $G'$ is the $4$-uniform hypergraph obtained from the graph $G$ by adding $2$ new vertices $X, Y$ and including them in all the edges. We call $X,Y$ the two \emph{centres}. Note that every isomorphism of $G'$ fixes $\{X, Y\}$ and by abuse of notation refer to any copy of $G'$ in $K_\omega^{(4)}$ by $G'$. Given the two points $\{X,Y\}$ the set of all hyperedges containing these points naturally identifies with the set of all edges of $K_{\omega}^{(2)}$. We call this set of hyperedges the \emph{$XY$ board}, which corresponds to the $K_\omega^{X,Y}$ notation from Section~\ref{sec:overview}. We use the vocabulary introduced for $G$ on the $XY$ board and we denote hyperedges on the $XY$ board by their other two vertices when $X,Y$ are clear from the context. Note that the two centres have individual and joint degrees $9=|E(G)|=|E(G')|$. The two points in the base have (individual) degree $5$. The other vertices have degree $2$. For the sake of conciseness, we use the immediate extensions of definitions for graphs to hypergraphs.

\begin{proof}[Proof of Theorem \ref{th:hyper}]
We give the following explicit drawing strategy for \textbf{P2} divided in four stages.

In the first stage \textbf{P2} plays as follows. Without loss of generality \textbf{P1} takes the hyperedge $TUVW$. \textbf{P2} takes the hyperedge $XYAB$, where all vertices are free. \textbf{P1} takes a hyperedge which without loss of generality does not contain $X$.
\textbf{P2} takes the hyperedge $XYBC$ where $C$ is a free vertex and then \textbf{P1} takes a hyperedge. Then \textbf{P1} does not have both $XAYC$ and $XABC$, as $C$ was a \textbf{P1}-free vertex before his move, say he does not have $XAYC$. Then \textbf{P2} takes $XAYC$ and \textbf{P1} takes another hyperedge. At this point on the $XY$ board \textbf{P2} has the hyperedges $AB$, $BC$, $AC$ and \textbf{P1} has at most $2$ hyperedges. \textbf{P2} takes the hyperedge $CD$, where $D$ is a free vertex and \textbf{P1} takes another hyperedge. Note that \textbf{P1} cannot have both $DA$ and $DB$, as $D$ was a \textbf{P1}-free vertex before his move, say he does not have $DA$. Then \textbf{P2} takes $DA$ and \textbf{P1} takes another hyperedge. At the end of the first stage \textbf{P1} has $6$ hyperedges $\mathcal{E}_0$ in total including $TUVW$ and at most $4$ of them are on the $XY$ board.\\

\noindent\textbf{Claim.} At the end of the first stage $AC$ is a potential base $AC=A_0A_1$ (see Figure~\ref{fig:1.2.2}) with special vertex $A_0$ such that on the entire $4$-uniform board \textbf{P1} has at most $2$ hyperedges that contain $A_0$ but do not contain $A_1$.

\begin{proof}[Proof of the claim]
If both vertices $A$ and $C$ can be the special vertex of $AC$, it suffices to choose $A_0$ to be one of them with at most $2$ hyperedges taken by \textbf{P1} containing it but not the other by the pigeon-hole principle (as $TUVW$ is disjoint from $AC$). Otherwise, if, say, $C$ is not special, then none of the hyperedges of the triangle or $4$-cycle with a free hyperedge from $C$ can contain $A$, so \textbf{P1} has at most $2$ hyperedges containing $A$ but not $C$. Therefore, $A$ is the special vertex of the potential base $AC$.\end{proof}

In the second stage we consider the $XY$ board and proceed as in the proof of Lemma~\ref{lem:2}, though the proof requires more attention. \textbf{P2} takes edges from $A_1$ to free vertices $F_i$ for $i=1,\dots,k$ until at some point \textbf{P1} does not take the edge $A_0F_k$. Then \textbf{P2} takes $A_0F_k$ and \textbf{P1} takes two extra hyperedges. Let $\mathcal{E}_{1}=\{A_0F_i,1\le i<k\}$ be the star taken by \textbf{P1} and $\mathcal{E}_2$ be the set consisting of the last two hyperedges taken by \textbf{P1}.

At the end of the second stage \textbf{P1} has $TUVW$, $7$ other hyperedges and the star $\mathcal{E}_1$ (which does not have vertices in common with $TUVW$). For the third stage, consider three cases.\\

\noindent\textbf{Case I.} \textbf{P1} has at least $8$ hyperedges in a copy $G^1$ of $G'$ containing $TUVW$ with its base present.

Assume without loss of generality that $G^1$ has centres $TU$ and base $C_0C_1$. Note that $G^1\cap\mathcal{E}_1=\varnothing$ and so \textbf{P1} has exactly $8$ hyperedges in $G^1$ and those are precisely $\mathcal{E}_0\cup\mathcal{E}_2$. Let $C_{\epsilon_1}D_1$ be the edge absent in $G^1$. Then \textbf{P2} takes the edge $C_{1-\epsilon_1}D_1$ and, while \textbf{P1} keeps taking edges $C_{\epsilon_i}D_i$ to some vertices $D_i$, \textbf{P2} keeps taking edges $C_{1-\epsilon_i}D_i$ for $\epsilon_i \in \{0,1\}$ and $2\le i\le m$ until at some point \textbf{P1} either stops or takes a different kind of edge. Let $\mathcal{E}_3=\{C_{\epsilon_i}D_i,1\le i\le m\}$ and let $\mathcal{E}_4$ be the set consisting of the last edge.

In the fourth stage \textbf{P2} moves back to the $XY$ board. He takes edges from $A_1$ to free vertices $F_{i}$ for $k<i\le l$ until at some point \textbf{P1} does not take the edge $A_0F_{l}$. Then \textbf{P2} takes $A_0F_l$ and constructs a $G$. The game stops after \textbf{P2}'s move. Let $\mathcal{E}_5=\{A_0F_i,k<i<l\}$ and $\mathcal{E}_6$ be the set consisting of the last edge taken by \textbf{P1}. We claim that \textbf{P1} does not have a $G'$.

Assume for a contradiction that \textbf{P1} has a $G'$ with centres $HI$. Note that the $XY$ and $TU$ boards intersect only in the hyperedge $XYTU$, which is not of the form $XYA_0F_j$. Then on the $HI$ board, \textbf{P1} has either at most $1$ hyperedge of the $TU$ board or no hyperedges of the form $XYA_0F_j$. Indeed if, on the contrary, the $HI$ board contains $TUJ_1K_1$, $TUJ_2K_2$ and $XYA_0F_j$, then $HI=J_1K_1=J_2K_2$, as $TU\cap XYA_0F_j=\varnothing$, a contradiction. 

Assume that the $HI$ board contains a hyperedge of the star $\mathcal{E}_1 \cup \mathcal{E}_5$ taken by \textbf{P1}. Then the $HI$ board contains at most one hyperedge from the $TU$ board and hence it contains at most one hyperedge from $\mathcal{E}_0 \cup \mathcal{E}_2 \cup \mathcal{E}_3$, since $\mathcal{E}_0\cup\mathcal{E}_2\subset G^1$ is in the $TU$ board and so is $\mathcal{E}_3$ by definition. Notice that a star in one $2$-uniform board is a star in any $2$-uniform board, and hence the $HI$ board contains a star from $\mathcal{E}_1 \cup \mathcal{E}_5$. Finally, the $HI$ board contains at most two hyperedges from $\mathcal{E}_4 \cup \mathcal{E}_6$. Altogether, on the $HI$ board, \textbf{P1} has at most a star, and three extra hyperedges. However, by Remark~\ref{rem:conda}, this is not enough to complete a $G'$ -- contradiction.

Otherwise, assume that $G'$ has no hyperedge of the star $\mathcal{E}_1\cup\mathcal{E}_5$. Further, by the proof of Lemma~\ref{lem:2} we can assume that $\{H,I\}\neq\{T,U\}$. However, disregarding the star $\mathcal{E}_1\cup\mathcal{E}_5$, the only vertices with \textbf{P1}-degree at least $9$ are $T$, $U$, $C_0$ and $C_1$ and the last two have joint \textbf{P1}-degree at most $1+2$, since $\mathcal{E}_2\subset G^1$ is in the $TU$ board, which has only one hyperedge $C_0C_1$. So, without loss of generality, we can assume that $\{H,I\}=\{T,C_0\}$. Notice that the intersection of the $TU$ and $HI=TC_0$ boards is a star from $U$ when viewed in the $TC_0$ board and $\mathcal{E}_0\cup\mathcal{E}_2\subset G^1$ is in the $TU$ board and so is $\mathcal{E}_3$ by definition. Therefore, on the $TC_0$ board, \textbf{P1} has only a star (from $U$) from $\mathcal{E}_{0}\cup\mathcal{E}_2 \cup \mathcal{E}_3$ and at most $2$ extra hyperedges from $\mathcal{E}_4\cup\mathcal{E}_6$, which is not enough to complete a $G'$ by Remark~\ref{rem:conda} -- contradiction.\\

\noindent\textbf{Case II.} \textbf{P1} has at least $8$ hyperedges in a copy of $G'$ containing $TUVW$ with its base $C_0C_1$ absent.

\textbf{P2} takes $C_0C_1$ and then, in the fourth stage, proceeds as in Case I with the same proof.\\

\noindent\textbf{Case III.} For any $G'$ containing $TUVW$, $e_{\textbf{P1}}(G')\leq 7$.

In this case we directly skip to the fourth stage. \textbf{P2} takes hyperedges in the $XY$ board from $A_1$ to free vertices $F_i$ for $k<i\le l$ until at some point \textbf{P1} does not take the hyperedge $A_0F_l$. Then \textbf{P2} takes $A_0F_l$ and constructs a $G'$. The game stops after \textbf{P2}'s move. Let $\mathcal{E}_3=\{A_0F_i,k<i<l\}$ and $\mathcal{E}_4$ be the set consisting of the last hyperedge taken by \textbf{P1}. Assume for a contradiction that \textbf{P1} has a $G'$ with centres $HI$ and base $C_0C_1$. 

Note that $G'$ cannot contain $TUVW$, since then $e_{\textbf{P1}}(G')\leq 8$.  Also, $G'$ cannot be disjoint from both $TUVW$ and the star $\mathcal{E}_1 \cup \mathcal{E}_3$, as $e_{\textbf{P1}}(G')\le|(\mathcal{E}_0\setminus\{TUVW\})\cup\mathcal{E}_2\cup\mathcal{E}_4|\le 8$. Thus, $G'$ intersects the star $\mathcal{E}_1 \cup \mathcal{E}_3$, but does not contain $TUVW$. Recall that $\deg_\textbf{P1}(F_j)\leq 1+|\mathcal{E}_2\cup\mathcal{E}_4|\le 4$ implies that $F_j$ is not a centre nor is it in a base. Therefore, without loss of generality $\{H,I,C_0\}=\{X,Y, A_0\}$. In Lemma~\ref{lem:2} we already proved that $G'$ cannot be on the $XY$ board, so without loss of generality $(H,I,C_0)=(X,A_0,Y)$. Moreover $C_1 \neq A_1$ as $XYA_0A_1$ is taken by \textbf{P2}. Since without $\mathcal{E}_2\cup\mathcal{E}_4$ we have $\deg_\textbf{P1}(F_i)=1$ for all $i$, $G'$ has at most $|\mathcal{E}_2\cup\mathcal{E}_4|\le 3$ hyperedges from the star $\mathcal{E}_1\cup\mathcal{E}_3$. In addition, $G'$ has at most $3$ hyperedges from $\mathcal{E}_2\cup\mathcal{E}_4$. This means that $G'$ contains at least $3$ hyperedges from $\mathcal{E}_0$. Recall that by construction, there are at most $2$ hyperedges in $\mathcal{E}_0$ that contain $A_0$ but do not contain $A_1$. To arrive at the desired contradiction it is enough to show that $G'$ does not contain any hyperedge of the form $XA_0JA_1$. Indeed, if such a hyperedge is in $G'$, then necessarily $J=C_1$ or $J=C_0$, since it needs to intersect the base $C_0C_1$. However, the hyperedge $XA_0C_0A_1=XA_0YA_1$ is taken by \textbf{P2} and the hyperedge $XA_0C_1A_1$ cannot be in $G'$ because \textbf{P2} has $XA_0C_0A_1=XA_0YA_1$.
\end{proof}
\begin{rem}
One can easily extend this proof to obtain $r$-uniform graphs with the same property for all $r\geq 4$.
\end{rem}

\section{Concluding remarks}
\label{sec:remarks}
\subsection{Key aspects}
As the strategy is quite cumbersome and does not have a clear structure, it is not hard to miss the forest for the trees. Let us point out a few aspects which are important for the understanding of the dynamic of the Ramsey game in general.

The strategy reflects the significant differences between weak and strong games. The most important feature which arises in this context is the notion of delay: unlike the strategy in \cite{Hefetz17}, in ours \textbf{P2} does not focus solely on building a core fast, but rather needs to delay \textbf{P1} at least as much as \textbf{P1} delays him. We now point out some parts of the strategy which best highlight this feature.

Firstly, as it is clear from Figure~\ref{fig:tree}, \textbf{Case 1.} is much easier than \textbf{Case 2.}, so the second move of \textbf{P1} is crucially important. Unexpectedly, the harder \textbf{Case 2.} corresponds to two seemingly contradictory behaviours of \textbf{P1}. On the one hand, \textbf{P1} goes after \textbf{P2} in $K^2$, which can only have the purpose of blocking \textbf{P2}, since \textbf{P2} is the first player there. On the other hand, \textbf{P1} plays disjointly from \textbf{P2}'s edge, which is not the most natural of ``blocking'' moves. From another perspective, this is less surprising. Given that we prove exactly that \textbf{P2} can force a draw by moving away from \textbf{P1} in the other copy of $K_\omega$, it is reasonable to expect that the most efficient move of \textbf{P1} would be to also start playing ``away'' from \textbf{P2}.

Another striking fact is that the ``right'' move of \textbf{P2} in \textbf{Case \breakdot{2.1.}} is to go after \textbf{P1}, who is the ``second player'' in $K^2$. The purpose of this move is to constrain \textbf{P1}'s possibilities and later force \textbf{P1} to lose edges. This philosophy is even more visible in \textbf{Case \breakdot{2.1.2.1.1.}}. There \textbf{P2} takes an edge which is completely useless to him from a ``Maker'' perspective, but makes \textbf{P1} lose $2$ edges, so it is as though by playing this ``Breaker'' move \textbf{P2} gets a net advantage of $1$ edge. 

These examples show that a good strong game strategy should by all means seek to interact with the other player. However, the main approach to strong games so far relies on finding fast ``Maker'' strategies for the corresponding weak game and transforming them into strategies for the strong games \cite{Ferber11,Ferber14,Hefetz17}. Thus, our strategy suggests that the Ramsey game requires more sophisticated arguments.

On a more technical note, the idea of leaving additional edges unspecified and disregarding the stage of the game when an edge was played decreases the number of cases tremendously. This approach allows us to only have the \emph{tiny} tree in Figure~\ref{fig:tree}, which can readily be analysed without the help of a computer in short time. The driving force of the drawing strategy is Lemma~\ref{lem:2}, which provides us with intermediate desired configurations together with a drawing strategy from these configurations.

\subsection{Open problems}
\label{secopen}
The present work raises various natural closely related questions which, in view of our results, do not seem as widely open and intractable as previously.

The prime question to answer is the following.
\begin{que}
Is the game $\mathcal{R}(K_\omega,G)$ a draw?
\end{que}
As we already mentioned if the answer is positive, then this would be the first such known graph. Otherwise, $G$ would be the answer to the next question.
\begin{que}
Does there exist a target graph $H$ such that $\mathcal{R}(K_\omega,H)$ is a \textbf{P1}-win, but $\mathcal{R}(K_\omega\sqcup K_\omega,H)$ is not?
\end{que}
Such a counterexample would prove that yet another easy and folklore fact for finite boards breaks down for infinite ones.

Another line of thought, which might not lead directly towards $\mathcal{R}(K_n,K_k)$, was suggested by Bowler \cite{Bowler12}. He proved the natural characterisation of positional games which are a draw on infinite disjoint copies of a finite one. One can wonder if the game on $G$ gives a draw on an infinite number of copies of $K_n$. We believe that this is not the case.
\begin{conj}
For $n$ sufficiently large $\mathcal{R}(\bigsqcup_\omega K_n,G)$ is a \textbf{P1}-win.
\end{conj}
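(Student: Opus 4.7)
The plan is to exploit the fact that $G$ is connected, so any copy of $G$ must lie inside a single component $K_n^{(j)}$; this decouples the game into what happens within individual components. First, for $n\ge r(G,G)$, Ramsey's theorem ensures $\mathcal{R}(K_n,G)$ cannot end in a draw; combined with strategy stealing this yields a winning strategy $\sigma_n$ for \textbf{P1} on a single $K_n$. Iterating the folklore disjoint-union lemma mentioned in the introduction, \textbf{P1} then wins $\mathcal{R}(\bigsqcup_{i=1}^{k}K_n,G)$ for every finite $k$, so the real content of the conjecture is the passage from finite to infinite unions.

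For the infinite union I would have \textbf{P1} designate $K_n^{(1)}$ as the \emph{building board} and play $\sigma_n$ there; whenever \textbf{P2} plays in some outside copy $K_n^{(j)}$ with $j>1$, \textbf{P1} responds inside $K_n^{(j)}$ with a local \emph{blocking} move whose sole purpose is to delay \textbf{P2}'s attempt to build a $G$ in that component. Since $G$ is connected, play in different components never combines into a single copy of $G$, so this decoupling is sound in principle, and the analysis reduces to what happens in the building board versus what happens in each outside board separately.

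The main obstacle is a delicate quantitative race. Since $|E(G)|=9$, an undisturbed \textbf{P2} can finish a $G$ in exactly $9$ of his moves, i.e.\ by overall move $18$, whereas \textbf{P1}'s fastest winning time on $K_n$ against an \emph{active} Breaker must exceed $9$: after $8$ of \textbf{P1}'s moves the missing completing edge is uniquely determined and \textbf{P2}'s preceding $8$ moves suffice to block it. Consequently \textbf{P1} cannot simply outrun \textbf{P2} inside $K_n^{(1)}$; some of his moves must be spent blocking outside. Making this trade-off work requires simultaneously (i) a local blocking strategy that neutralizes \textbf{P2}'s attempt in each outside copy with only a bounded overhead of \textbf{P1}-moves beyond the moves \textbf{P2} spends there, and (ii) a variant of $\sigma_n$ on $K_n^{(1)}$ which remains winning even when \textbf{P1} is occasionally forced to skip turns on the building board in order to attend to the blocking duties outside (a kind of "extra move" or "pass" tolerance, notoriously subtle in strong games).

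Adapting the interleaved \emph{blocking-threats-while-building} philosophy developed in Section~\ref{sec4} to this multi-component setting looks to me the most promising route, because that strategy already combines delay and construction against a single clever opponent; here one would need to extend the bookkeeping across the countable family of outside components and verify that \textbf{P1}'s building in $K_n^{(1)}$ completes before \textbf{P2} breaks through in any one of them. A cleaner compactness-style argument would apply provided the winning strategy on $\bigsqcup_{i=1}^{k}K_n$ has winning time bounded uniformly in $k$; establishing such a uniform bound, however, appears essentially equivalent to carrying out the race analysis above, and I expect it to be the true technical heart of the conjecture.
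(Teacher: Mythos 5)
This statement is posed in the paper as an open conjecture in Section~\ref{secopen}; the paper offers no proof of it, so there is nothing to compare your argument against, and the relevant question is simply whether your proposal closes the problem. It does not. The parts you establish are the easy ones: connectivity of $G$ decouples the components, Ramsey plus strategy stealing gives a win on a single $K_n$, the folklore union lemma handles every finite $\bigsqcup_{i=1}^{k}K_n$, and compactness would finish the job \emph{if} the winning time were bounded uniformly in $k$. You then correctly identify that establishing this uniform bound (equivalently, carrying out the race analysis for your building-board/blocking scheme) is ``the true technical heart of the conjecture'' --- and you leave it unresolved. That is precisely the open problem, so the proposal is a plan rather than a proof.

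Two further cautions. First, the paper's own Theorem~\ref{th:main} and Corollary~\ref{cor:main} show that on just \emph{two} infinite components \textbf{P2} can already survive indefinitely by playing ``away'' from \textbf{P1}, and that on $K_n\sqcup K_n$ any win of \textbf{P1} takes at least $2n-O(1)$ moves; so the danger in your scheme is not hypothetical --- \textbf{P2} opening a fresh outside component is exactly the move that defeats naive building strategies, and your item~(i) (blocking each outside copy with only bounded overhead) together with item~(ii) (a pass-tolerant $\sigma_n$) is where all the difficulty is concentrated. The paper points at the intended quantitative framework, namely Bowler's characterisation of games that remain \textbf{P1}-wins on infinitely many copies and the associated notion of \emph{initiative} (how many passes \textbf{P2} can be granted before \textbf{P1} can no longer win); your proposal does not engage with this. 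Second, your lower-bound aside --- that after $8$ of \textbf{P1}'s moves the completing edge is ``uniquely determined'' --- needs justification: one must rule out that $8$ edges of \textbf{P1} contain two distinct copies of $G$ each missing a single (distinct) edge, i.e.\ a double threat. This is checkable for this particular $G$, but it is asserted rather than argued.
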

Finally, this last conjecture seems closely related to determining if the initiative in the sense of \cite{Bowler12} of $\mathcal{R}(K_n,K_4)$ is $6$, as asked in the next question.
\begin{que}
Let $n$ be a fixed large integer. What is the minimal $k$ such that \textbf{P1} has a winning strategy for $\mathcal{R}(K_n,K_4)$ such that if the second player is allowed to pass, he would not be able to do so more than $k$ times before losing?
\end{que}

\section*{Acknowledgements}
The authors would like to thank Imre Leader for useful discussions and the anonymous referees for suggesting improvements of the presentation. The third author would like to thank Trinity Hall, University of Cambridge, for support through the Trinity Hall Research Studentship.

\bibliographystyle{plain}
\bibliography{Bib}

\end{document}